\theoremstyle{plain}
\newtheorem{theorem}{Theorem}[section]
\newtheorem{lemma}[theorem]{Lemma}
\newtheorem{corollary}[theorem]{Corollary}
\newtheorem{proposition}[theorem]{Proposition}
\theoremstyle{definition}
\newtheorem{definition}[theorem]{Definition}
\newtheorem{example}[theorem]{Example}
\theoremstyle{remark}
\newtheorem{remark}{Remark}[section]
\DeclareMathOperator{\co}{co}
\DeclareMathOperator{\cl}{cl}
\DeclareMathOperator{\relint}{relint}
\DeclareMathOperator*{\argmin}{arg\,min}
\DeclareMathOperator{\interior}{int}
\DeclareMathOperator{\dist}{dist}
\author{Dolgopolik M.V.\footnote{Saint Petersburg State University, Saint Petersburg, Russia}
\footnote{Institute for Problems in Mechanical Engineering of the Russian Academy of Sciences, Saint Petersburg,
Russia}}
\title{The method of codifferential descent for convex and global piecewise affine optimization}
\begin{document}

\maketitle

\begin{abstract}
The class of nonsmooth codifferentiable functions was introduced by professor V.F.~Demyanov in the late 1980s. He also
proposed a method for minimizing these functions called the method of codifferential descent (MCD). However, until now
almost no theoretical results on the performance of this method on particular classes of nonsmooth optimization problems
were known. In the first part of the paper, we study the performance of the method of codifferential descent on a class
of nonsmooth convex functions satisfying some regularity assumptions, which in the smooth case are reduced to the
Lipschitz continuity of the gradient. We prove that in this case the MCD has the iteration complexity bound
$\mathcal{O}(1 / \varepsilon)$. In the second part of the paper we obtain new global optimality conditions for piecewise
affine functions in terms of codifferentials. With the use of these conditions we propose a modification of the MCD for
minimizing piecewise affine functions (called the method of global codifferential descent) that does not use line
search, and discards those ``pieces'' of the objective functions that are no longer useful for the optimization
process. Then we prove that the MCD as well as its modification proposed in the article find a point of global minimum
of a nonconvex piecewise affine function in a finite number of steps.
\end{abstract}

\section{Introduction}

An interesting approach to the analysis of nonsmooth functions based on the use of continuous approximations called
codifferentials was proposed by Demyanov in \cite{Demyanov_InCollection_1988,Demyanov1988,Demyanov1989}. He
developed the codifferential calculus \cite{DemRub_book} (see~\cite{Zaffaroni,Dolgopolik_CodiffCalc,Dolgopolik_Gen} for
its extensions and generalizations), and proposed a method for minimizing codifferentiable functions called the method
of codifferential descent (MCD). This method was applied to some problems of cluster analysis \cite{DemBagRub},
computational geometry \cite{TamasyanChumacov,TamasyanChumacov_Conf}, calculus of variations
\cite{DemyanovTamasyan2011,DemyanovTamasyan2014} and optimal control problems \cite{Fominyh,Fominyh_OptimLetters}.
Hybrid methods for solving convex and DC (difference-of-convex) optimization problems combining the ideas of bundle
methods and the MCD were proposed in \cite{BagGanUgonTor,BagUgon,TorBagKar}. A comprehensive convergence analysis of the
MCD and some of its modifications was presented in the recent paper \cite{Dolgopolik_MCD}. However, almost nothing is
known about the global performance of the MCD on particular classes of nonsmooth optimization problems apart from some
results of numerical experiments.

The main goal of this article is to analyse the overall performance of the method of codifferential descent in two
tractable cases. Namely, the first part of the paper is devoted to the analysis of this method in the convex case.
Some of the most popular black-box methods of convex optimization are subgradient methods
\cite{Nesterov,Nesterov2015,HuSimYang2015,Neumaier2016,YangLin2018} and bundle methods
\cite{HiriartUrruty,Makela2002,Kiwiel2006,HouSun2008,OliveiraSolodov2016}. However, these methods are relatively slow in
the general case, since they require $\mathcal{O}(1 / \varepsilon^2)$ iterations to find an $\varepsilon$-optimal
solution \cite{Nesterov}. In the case when a certain information about the structure of the optimization problem under
consideration is known, one can devise significantly faster methods (see, e.g., \cite{Nesterov2005,Nesterov2013}). In
this article, we demonstrate that under some natural regularity assumptions the method of codifferential descent finds
an $\varepsilon$-optimal solution in at most $\mathcal{O}(1 / \varepsilon)$ iterations, which is better than the
iteration complexity bound for subgradient methods, despite the fact that the MCD is also a black-box method. On the
other hand, it should be noted that the MCD utilises an oracle that provides significantly more information about the
objective function than the one used by subgradient and bundle methods. Thus, in a sense, the MCD trades off the
complexity of each call of the oracle for the better rate of convergence in comparison with subgradient methods.

The second part of the paper is devoted to the analysis of the method of codifferential descent in the piecewise affine
case. As it was demonstrated via numerical simulation in \cite{DemBagRub}, the MCD ``jumps over'' some points of local
minimum of nonsmooth functions, and in some applications it is capable of finding a global minimizer of the objective
function in spite of the fact that the MCD is a black-box local search method. To understand a reason behind this
phenomenon we derive new global optimality conditions for piecewise affine functions in terms of codifferentials, which
are significantly different from the ones obtained by Polyakova \cite{Polyakova} or from the standard global optimality
conditions for DC (difference-of-convex) optimization problems \cite{HiriartUrruty_GlobalDC,HiriartUrruty_GlobalDC_2}.
It turns out that new conditions for global optimality are implicitly incorporated into the MCD. With the use of these
conditions we propose a modification of the MCD for minimizing piecewise affine functions that, unlike the original
method, does not use line search, and allows one to avoid unnecessary computations by discarding those ``pieces''
of the objective function that no longer provide useful information about the global behaviour of this function. Then
we prove that the modified MCD as well as the MCD itself find a point of global minimum of a piecewise affine function
in a finite number of steps, thus giving a first theoretical explanation for the ability of the MCD to find a
\textit{globally} optimal solution in some applications.

The paper is organized as follows. In Section~\ref{Sect_ConvexCase} some new natural regularity assumptions on nonsmooth
convex functions are introduced, and the performance of the MCD on the class of nonsmooth convex functions satisfying
these assumptions is analysed. New necessary and sufficient global optimality conditions for piecewise affine functions
in terms of codifferentials are obtained in Section~\ref{Sect_PiecewiseAffineCase}. We utilise these conditions in order
to propose a modification of the MCD, and to prove that this modification as well as the original method find a point of
global minimum of a piecewise affine function in a finite number of steps. Finally, for reader's convenience, some
basic definitions and results from the codifferential calculus are given in Section~\ref{Sect_Preliminaries}.

\section{Preliminaries}
\label{Sect_Preliminaries}

Let $\mathcal{H}$ be a real Hilbert space, and $U$ be a neighbourhood of a point $x \in \mathcal{H}$. Recall that a
function $f \colon U \to \mathbb{R}$ is called \textit{codifferentiable} at $x$, if there exist weakly compact convex
sets $\underline{d} f(x), \overline{d} f(x) \subset \mathbb{R} \times \mathcal{H}$ such that for any 
$\Delta x \in \mathcal{H}$ one has
\begin{align*}
  \lim_{\alpha \to +0} \frac{1}{\alpha} \Big| f(x + \alpha \Delta x) - f(x) 
  &- \max_{(a, v) \in \underline{d} f(x)} \big( a + \langle v, \Delta x \rangle \big) \\
  &- \min_{(b, w) \in \overline{d} f(x)} \big( b + \langle w, \Delta x \rangle \big) \Big| 
  = 0,
\end{align*}
and
\begin{equation} \label{CodiffApproxZeroAtZero}
  \max_{(a, v) \in \underline{d} f(x)} a + \min_{(b, w) \in \overline{d} f(x)} b = 0.
\end{equation}
Here $\langle \cdot, \cdot \rangle$ is the inner product in $\mathcal{H}$, and we suppose that the space 
$\mathbb{R} \times \mathcal{H}$ is endowed with the norm $\| (a, v) \|^2 = a^2 + \| v \|_{\mathcal{H}}^2$ 
for any $(a, v) \in \mathbb{R} \times \mathcal{H}$. The pair $D f(x) = [\underline{d} f(x), \overline{d} f(x)]$ is
called a \textit{codifferential} of $f$ at $x$, the set $\underline{d} f(x)$ is called a \textit{hypodifferential} of
$f$ at $x$, while the set $\overline{d} f(x)$ is referred to as a \textit{hyperdifferential} of $f$ at $x$. Let us note
that the function $f$ is codifferentiable at $x$ if and only if its increment $f(x + \Delta x) - f(x)$ can be locally
approximated by the difference of two convex functions, i.e. by a DC function (see \cite[Example~3.10]{Dolgopolik_Gen}
for more details). Hence, in particular, any function that can be represented as the difference of convex functions is
codifferentiable.

It is easy to see that a codifferential of $f$ at $x$ is not unique. Therefore, it seems natural to single out
a codifferential of $f$ at $x$ that has some useful additional properties. At first, let us note that without loss of
generality \cite{DemRub_book,Dolgopolik_MCD} one can suppose that
\begin{equation} \label{CodiffApprox_CorrectSigns}
  \max_{(a, v) \in \underline{d} f(x)} a = \min_{(b, w) \in \overline{d} f(x)} b = 0
\end{equation}
(cf.~\eqref{CodiffApproxZeroAtZero}). At second, recall that $f$ is said to be \textit{continuously} codifferentiable at
$x$, if $f$ is codifferentiable at every point in a neighbourhood of $x$, and there exists a codifferential mapping 
$D f(\cdot) = [\underline{d} f(\cdot), \overline{d} f(\cdot)]$ defined in a neighbourhood of $x$, and such that 
the multifunctions $\underline{d} f(\cdot)$ and $\overline{d} f(\cdot)$ are Hausdorff continuous at $x$. This
codifferential mapping $D f(\cdot)$ is called continuous at $x$. Similarly, a function 
$f \colon \mathcal{H} \to \mathbb{R}$ is called continuously codifferentiable on a set $A \subset \mathcal{H}$, if $f$
is codifferentiable at every point $x \in A$, and the exists a \textit{continuous} codifferential mapping $D f(\cdot)$
defined on $A$, i.e. a codifferential mapping $D f(\cdot)$ such that the corresponding multifunctions 
$\underline{d} f \colon A \rightrightarrows \mathbb{R} \times \mathcal{H}$ and
$\overline{d} f \colon A \rightrightarrows \mathbb{R} \times \mathcal{H}$ are Hausdorff continuous on $A$. Let us note
that the set of all those nonsmooth functions that are continuously codifferentiable on a given convex set $A$ is
closed under all standard algebraic operations, the pointwise maximum and minimum of finite families of functions, as
well as the composition with smooth functions. Furthermore, there exists simple and well-developed codifferential
calculus \cite{DemRub_book,Dolgopolik_Gen,Dolgopolik_MCD}.

One can check that if a function $f \colon U \to \mathbb{R}$ is codifferentiable at $x$, then $f$ is directionally
differentiable at $x$, and the standard necessary condition for a minimum $f'(x, \cdot) \ge 0$ is satisfied 
if and only if
\begin{equation} \label{NessOptCond}
  0 \in \underline{d} f(x) + \{ (0, w) \} \quad \forall (0, w) \in \overline{d} f(x)
\end{equation}
(see~\cite{DemRub_book,Dolgopolik_MCD}). Here $f'(x, h)$ is the directional derivative of $f$ at $x$ in the direction
$h$. A point $x$ satisfying optimality condition \eqref{NessOptCond} is called an \textit{inf-stationary} point of the
function $f$. Note that the definition of inf-stationary point is independent of the choice of a codifferential, since
the optimality condition $f'(x, \cdot) \ge 0$ is invariant with respect to the choice of a codifferential.

One can utilise optimality condition \eqref{NessOptCond} to design a numerical method for minimizing codifferentiable
functions called \textit{the method of codifferential descent} \cite{DemRub_book,Dolgopolik_MCD}. Let a function 
$f \colon \mathcal{H} \to \mathbb{R}$ be codifferentiable (i.e. codifferentiable on $\mathcal{H}$), and $D f(\cdot)$ be
its given codifferential mapping. For any $\mu \ge 0$ denote
$$
  \overline{d}_{\mu} f(x) = \{ (b, w) \in \overline{d} f(x) \mid b \le \mu \}
$$
(cf.~\eqref{CodiffApprox_CorrectSigns}). Let us note that in the definition of $\overline{d}_{\mu} f(x)$ it is
sufficient to consider only extreme points $(b, w)$ of the hyperdifferential $\overline{d} f(x)$
(see~\cite{Dolgopolik_MCD}). A description of the original version of the method of codifferential descent (MCD)
\cite{DemRub_book} is given in Algorithm~\ref{algorithm_MCD}.

\begin{algorithm}[t]	\label{algorithm_MCD}
\caption{The method of codifferential descent (MCD).}

\noindent\textit{Step}~1. {Choose $\mu \ge 0$, a starting point $x_0 \in \mathcal{H}$, and set $n := 0$.}

\noindent\textit{Step}~2. {Compute $\underline{d} f(x_n)$ and $\overline{d}_{\mu} f(x_n)$.}

\noindent\textit{Step}~3. {For any $z = (b, w) \in \overline{d}_{\mu} f(x_n)$ compute 
$(a_n(z), v_n(z)) \in \mathbb{R} \times \mathcal{H}$ by solving
$$
  \min \| (a, v) \|^2 \quad \text{s.t. } (a, v) \in \underline{d} f(x_n) + z.
$$
}

\noindent\textit{Step}~4. {For any $z \in \overline{d}_{\mu} f(x_n)$ compute $\alpha_n(z) \ge 0$ by solving
$$
  \min_{\alpha} f(x_n - \alpha v_n(z)) \quad \text{s.t. } \alpha \ge 0.
$$
}

\noindent\textit{Step}~5. {Compute $z_n \in \overline{d}_{\mu} f(x_n)$ by solving
$$
  \min_z f(x_n - \alpha_n(z) v_n(z)) \quad \text{s.t. } z \in \overline{d}_{\mu} f(x_n).
$$
Set $x_{n + 1} = x_n - \alpha_n(z_n) v_n(z_n)$, $n := n + 1$, and go to Step 2.
}
\end{algorithm}

Note that in each iteration of the MCD one must perform line search in several directions 
(unless $\overline{d} f(\cdot) \equiv \{ 0 \}$; see Step~4). One can verify that at least one of these directions is a
descent direction of the function $f$, and $f(x_{n + 1}) < f(x_n)$ for all $n \in \mathbb{N} \cup \{ 0 \}$. On the other
hand, some of these directions might not be descent directions, i.e. the function $f$ may first increase and then
decrease in these directions. This interesting feature of the MCD allows it to ``jump over'' some points of local
minimum of the function $f$, provided the parameter $\mu > 0$ is sufficiently large (see \cite{DemBagRub} for a
particular example). However, no results on the convergence of the MCD to a global minimizer of the function $f$ are
known. The main goal of this article is to shed some light on this problem. To this end, below we study the performance
of the MCD in the case when $f$ is either convex or piecewise affine. For a comprehensive convergence analysis of the
MCD and its modifications in the general case see \cite{Dolgopolik_MCD}.

\section{The method of hypodifferential descent for convex optimization}
\label{Sect_ConvexCase}

In this section, we study the performance of the method of codifferential descent in the convex case. Let 
$f \colon \mathcal{H} \to \mathbb{R}$ be a convex function. As it was noted above, a function is codifferentiable
if and only if its increment can be locally approximated by the difference of convex function (i.e. a DC function).
If a codifferentiable function under consideration is convex, then it is natural to assume that its increment can be
approximated by a convex function. In other words, it is natural to suppose that $f$ is \textit{hypodifferentiable},
i.e. that there exists a codifferential mapping $D f(\cdot)$ such that $\overline{d} f(\cdot) \equiv \{ 0 \}$.
Furthermore, in this section we suppose that the function $f$ is continuously hypodifferentiable on $\mathcal{H}$, and
consider only its continuous hypo\-differential mapping $\underline{d} f(\cdot)$. Note that by \eqref{NessOptCond} 
a point $x^*$ is a global minimizer of $f$ if and only if $0 \in \underline{d} f(x^*)$, since in the convex case
$f'(x^*, \cdot) \ge 0$ if and only if $x^*$ is a global minimizer of $f$.

When the MCD is applied to a hypodifferentiable convex function, one calls it \textit{the method of
hypodifferential descent} (MHD). Moreover, in the convex case one can utilise Armijo's step-size rule
(cf.~\cite{Dolgopolik_MCD}). The scheme of the MHD for minimizing the function $f$ is given in
Algorithm~\ref{algorithm_MHD}.

\begin{algorithm}[t] \label{algorithm_MHD}
\caption{The method of hypodifferential descent (MHD).}

\noindent\textit{Step}~1. {Choose a starting point $x_0 \in \mathcal{H}$, $\sigma \in (0, 1)$ and $\gamma \in (0, 1)$,
and set $n := 0$.
}

\noindent\textit{Step}~2. {Compute $\underline{d}f(x_n)$.}

\noindent\textit{Step}~3. {Compute $(a_n, v_n) \in \mathbb{R} \times \mathcal{H}$ by solving
$$
  \min \| (a, v) \|^2 \quad \text{s.t. } (a, v) \in \underline{d} f(x_n).
$$
}

\noindent\textit{Step}~4. {Compute $k \in \mathbb{N} \cup \{ 0 \}$ by solving
$$
  \max_{k \in \mathbb{N} \cup \{ 0 \}} \gamma^k \quad
  \text{s.t. } f(x_n - \gamma^k v_n) - f(x_n) \le - \gamma^k \sigma \| (a_n, v_n) \|^2,
$$
and set $\alpha_n = \gamma^k$.
}

\noindent\textit{Step}~5. {Set $x_{n + 1} = x_n - \alpha_n v_n$, $n := n + 1$, and go to Step~2.
}
\end{algorithm}
Let us note that by \cite[Lemma~1]{Dolgopolik_MCD} one has $f'(x_n, -v_n) \le -\| (a_n, v_n) \|^2$. Hence by the
definition of directional derivative for any sufficiently small $\alpha > 0$ one has
\begin{equation} \label{RelaxationAlongMHD_Direction}
  f(x_n - \alpha v_n) - f(x_n) \le \alpha \sigma f'(x_n, -v_n) \le - \alpha \sigma \| (a_n, v_n) \|^2,
\end{equation}
if $\| (a_n, v_n) \| > 0$, i.e. $0 \notin \underline{d} f(x_n)$. Therefore, the step sizes $\alpha_n$ (see Step~4
of the MHD) are correctly defined, and $f(x_{n + 1}) < f(x_n)$ for all $n \in \mathbb{N} \cup \{ 0 \}$, provided $x_n$
is not a point of global minimum of the function $f$. 

Our aim is to estimate a rate of convergence of the MHD for the function $f$. This problem is very complicated in the
general case due to the nonuniqueness of hypodifferential mapping. A poor choice of a hypodifferential mapping
might significantly slow down the convergence of the method. To overcome this difficulty we must assume that the chosen
hypodifferential mapping somehow agrees with the convexity of the function $f$. The following definition provides a
precise and natural formulation of this assumption.

\begin{definition}
Let $C \subseteq \mathcal{H}$ be a nonempty convex set. A hypodifferential mapping $\underline{d} f(\cdot)$ of the
function $f$ is called \textit{amenable} on $C$, if for any $x \in C$ and $(a, v) \in \underline{d} f(x)$ one has
$$
  f(y) - f(x) \ge a + \langle v, y - x \rangle \quad \forall y \in C.
$$
\end{definition}

Clearly, if $f$ is continuously differentiable, then $\underline{d} f(\cdot) = \{ (0, \nabla f(\cdot)) \}$ is an
amenable continuous hypodifferential mapping of the function $f$ on any convex set $C$, since
$$
  f(y) - f(x) \ge \langle \nabla f(x), y - x \rangle \quad \forall x, y \in \mathcal{H}
$$
due to the convexity of the function $f$. Moreover, the amenability of hypodifferential mapping is preserved under
addition and pointwise maximum. 

\begin{proposition} \label{Prp_AmenabilitySumMax}
Let convex functions $f_i \colon \mathcal{H} \to \mathbb{R}$ be hypodifferentiable, and 
$\underline{d} f_i(\cdot)$ be their hypodifferential mappings that are amenable on a convex set 
$C \subseteq \mathcal{H}$, $i \in I = \{ 1, \ldots, k \}$. Then 
\begin{equation} \label{HypodiffOfLinComb}
  \underline{d} g(\cdot) = \sum_{i = 1}^k \lambda_i \underline{d} f_i(\cdot)
\end{equation}
is a hypodifferential mapping of the function $g = \sum_{i = 1}^k \lambda_i f_i$ that is amenable on $C$ 
(here $\lambda_i \ge 0$), and
\begin{equation} \label{HypodiffOfMax}
  \underline{d} u(\cdot) = \co\Big\{ (f_i(\cdot) - u(\cdot), 0) + \underline{d} f_i(\cdot) \Bigm| 1 \le i \le k \Big\}
\end{equation}
is a hypodifferential mapping of the function $u = \max_{i \in I} f_i$ that is amenable on $C$ as well.
\end{proposition}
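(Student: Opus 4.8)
The plan is to prove each of the two assertions by splitting the work into two independent parts: first, that the set-valued map on the right-hand side is genuinely a hypodifferential mapping of the corresponding function, and second, the new property of amenability. The first part in both cases is nothing but the standard sum and maximum rules of the codifferential calculus \cite{DemRub_book,Dolgopolik_Gen,Dolgopolik_MCD}: since every $\overline{d} f_i(\cdot) \equiv \{ 0 \}$, these rules produce exactly \eqref{HypodiffOfLinComb} and \eqref{HypodiffOfMax} as hypodifferential mappings of $g$ and $u$ respectively, with trivial hyperdifferential. Hence I would cite those rules and concentrate the proof on verifying amenability directly from its definition.

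For the sum, fix $x \in C$ and an arbitrary $(a, v) \in \underline{d} g(x) = \sum_i \lambda_i \underline{d} f_i(x)$, and write it as $(a, v) = \sum_i \lambda_i (a_i, v_i)$ with $(a_i, v_i) \in \underline{d} f_i(x)$. Amenability of each $\underline{d} f_i(\cdot)$ gives $f_i(y) - f_i(x) \ge a_i + \langle v_i, y - x \rangle$ for every $y \in C$; multiplying by $\lambda_i \ge 0$ and summing over $i$ yields $g(y) - g(x) \ge a + \langle v, y - x \rangle$, which is precisely amenability of $\underline{d} g(\cdot)$ on $C$.

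For the maximum the idea is the same but the convex hull needs more care. The key observation is that, since each $\underline{d} f_i(x)$ is convex, any element of $\underline{d} u(x)$ can be written as a single convex combination drawing one point from each shifted set, namely $(a, v) = \sum_i \beta_i \big[ (f_i(x) - u(x), 0) + (a_i, v_i) \big]$ with $(a_i, v_i) \in \underline{d} f_i(x)$, $\beta_i \ge 0$ and $\sum_i \beta_i = 1$ (this is the standard description of the convex hull of a finite union of convex sets). Using $\sum_i \beta_i = 1$ to absorb the scalar terms $f_i(x) - u(x)$, one obtains
$$
  u(x) + a + \langle v, y - x \rangle = \sum_i \beta_i \big( f_i(x) + a_i + \langle v_i, y - x \rangle \big).
$$
Applying amenability of each $\underline{d} f_i(\cdot)$ bounds each summand by $f_i(y)$, then $f_i(y) \le u(y)$ and once more $\sum_i \beta_i = 1$ bound the right-hand side by $\sum_i \beta_i u(y) = u(y)$, giving $u(y) - u(x) \ge a + \langle v, y - x \rangle$ for all $y \in C$, as required. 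The only genuinely delicate point I foresee is this representation of a generic element of $\underline{d} u(x)$ as one convex combination with a single contribution from each $\underline{d} f_i(x)$; once that description is in place, the chain of inequalities driven by $\sum_i \beta_i = 1$ is routine.
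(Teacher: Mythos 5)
Your proof is correct, and your amenability arguments coincide with the paper's: the same convex-combination decomposition of a generic element of \eqref{HypodiffOfLinComb} and \eqref{HypodiffOfMax} (your representation of points of $\co\{(f_i(x) - u(x),0) + \underline{d} f_i(x)\}$ as a single convex combination with one point from each shifted convex set is exactly what the paper uses, and it is indeed the standard description of the convex hull of a finite union of convex sets), followed by the same chain of inequalities; the only cosmetic difference is that you bound each summand by $f_i(y) \le u(y)$ directly, whereas the paper inserts the intermediate bound $u(y) - u(x) \ge \max_{i}\big(f_i(x) - u(x) + a_i + \langle v_i, y - x\rangle\big)$ before passing to the convex combination. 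Where you genuinely diverge is the first half: you delegate the claim that \eqref{HypodiffOfLinComb} and \eqref{HypodiffOfMax} are hypodifferential mappings to the cited sum and maximum rules of the codifferential calculus, while the paper verifies this from scratch, using the support-function identity for the Minkowski sum and the sandwich inequality \eqref{MinMaxNumberInequal} to obtain the explicit remainder estimates \eqref{SumHypoDiffApproxEstimate} and \eqref{MaxHypoDiffApproximEstimate}. Your citation is legitimate and makes the proof shorter, but the paper's self-contained route is not wasted effort: those two quantitative estimates are reused verbatim immediately afterwards to prove that Lipschitzian approximations (Definition~\ref{Def_HypodiffLipschitz}) are preserved under sums and maxima, a result your version of the proof would not directly support, so you would have to redo the remainder bookkeeping there anyway.
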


\begin{proof}
Fix arbitrary $x, \Delta x \in \mathcal{H}$. By the definition of hypodifferentiable function for any $i \in I$ one has
$$
  f_i(x + \alpha \Delta x) - f_i(x) = 
  \max_{(a, v) \in \underline{d} f_i(x)} (a + \alpha \langle v, \Delta x \rangle )
  + o_i(\alpha),
$$
where $o_i(\alpha) / \alpha \to 0$ as $\alpha \to +0$. Hence
\begin{align} \notag
  g(x + \alpha \Delta x) - g(x) &= \sum_{i = 1}^k \lambda_i \big( f_i(x + \alpha \Delta x) - f_i(x) \big) \\
  &= \sum_{i = 1}^k \lambda_i \max_{(a, v) \in \underline{d} f_i(x)} (a + \alpha \langle v, \Delta x \rangle )
  + \sum_{i = 1}^k \lambda_i o_i(\alpha). \label{SumHypoDiffApproxEstimate}
\end{align}
Observe that
\begin{equation} \label{HypodiffOfSumRepresentation}
  \sum_{i = 1}^k \lambda_i \max_{(a, v) \in \underline{d} f_i(x)} (a + \alpha \langle v, \Delta x \rangle )
  = \max_{(a, v) \in \underline{d} g(x)} (a + \alpha \langle v, \Delta x \rangle),
\end{equation}
where $\underline{d} g(x)$ is defined as in \eqref{HypodiffOfLinComb}. Consequently, \eqref{SumHypoDiffApproxEstimate}
implies that
$$
  \Big| g(x + \alpha \Delta x) - g(x) - \max_{(a, v) \in \underline{d} g(x)} (a + \alpha \langle v, \Delta x \rangle)
  \Big| \le \sum_{i = 1}^k \lambda_i |o_i(\alpha)|.
$$
Therefore $g$ is hypodifferentiable, and \eqref{HypodiffOfLinComb} is its hypodifferential mapping. Let us check that
it is amenable on $C$. Indeed, fix $x, y \in C$ and $(a, v) \in \underline{d} g(x)$. By \eqref{HypodiffOfLinComb} there
exists $(a_i, v_i) \in \underline{d} f_i(x)$ such that 
\begin{equation} \label{HypogradOfConicCombination}
  (a, v) = \sum_{i = 1}^k \lambda_i (a_i, v_i).
\end{equation}
From the fact that the hypodifferentials $\underline{d} f_i(x)$ are amenable on $C$ it follows that
$$
  f_i(y) - f_i(x) \ge a_i + \langle v_i, y - x \rangle \quad \forall i \in I.
$$
Multiplying these inequalities by $\lambda_i$ and summing them up one obtains that
$$
  g(y) - g(x) = \sum_{i = 1}^k \big( \lambda_i f_i(y) - \lambda_i f_i(x) \big)
  \ge \sum_{i = 1}^k \lambda_i \big( a_i + \langle v_i, y - x \rangle \big) = a + \langle v, y - x \rangle,
$$
where the last equality follows from \eqref{HypogradOfConicCombination}. Thus, hypodifferential mapping
\eqref{HypodiffOfLinComb} of the function $g$ is amenable.

Let us now turn to the function $u$. By the definition of hypodifferentiable function one has
\begin{align*}
  u(x + \alpha \Delta x) - u(x) 
  &= \max_{i \in I} \big( f_i(x + \alpha \Delta x) - u(x) \big) \\
  &= \max_{i \in I} \Big( f_i(x) - u(x) + 
  \max_{(a, v) \in \underline{d} f_i(x)} (a + \alpha \langle v, \Delta x \rangle) + o_i(\alpha) \Big).
\end{align*}
Consequently, taking into account the fact that
\begin{equation} \label{HypodiffOfMaxRepresentation}
  \max_{(a, v) \in \underline{d} u(x)} (a + \alpha \langle v, \Delta x \rangle)
  = \max_{i \in I} \Big( f_i(x) - u(x) +
  \max_{(a, v) \in \underline{d} f_i(x)} (a + \alpha \langle v, \Delta x \rangle) \Big)
\end{equation}
(here $\underline{d} u(x)$ is defined as in \eqref{HypodiffOfMax}), and applying the inequality
\begin{equation} \label{MinMaxNumberInequal}
  \min_{i \in I} d_i \le \max_{i \in I} (c_i + d_i) - \max_{i \in I} c_i \le \max_{i \in I} d_i,
\end{equation}
which is valid for any $c_i, d_i \in \mathbb{R}$, with 
$c_i = f_i(x) - u(x) + \max_{(a, v) \in \underline{d} f_i(x)} (a + \alpha \langle v, \Delta x \rangle)$ and
$d_i = o_i(\alpha)$ one obtains that
$$
  \min_{i \in I} o_i(\alpha) \le
  u(x + \alpha \Delta x) - u(x) - \max_{(a, v) \in \underline{d} u(x)} (a + \alpha \langle v, \Delta x \rangle)
  \le \max_{i \in I} o_i(\alpha).
$$
Hence with the use of the inequality $\min_{i \in I} d_i \ge - \max_{i \in I} |d_i|$ one gets
\begin{equation} \label{MaxHypoDiffApproximEstimate}
  \Big| u(x + \alpha \Delta x) - u(x) - 
  \max_{(a, v) \in \underline{d} u(x)} (a + \alpha \langle v, \Delta x \rangle) \Big|
  \le \max_{i \in I} |o_i(\alpha)|,
\end{equation}
which implies that the function $u$ is hypodifferentiable, and \eqref{HypodiffOfMax} is its hypodifferential mapping.
Let us show that this mapping is amenable on $C$. Indeed, fix any $x, y \in C$ and $(a, v) \in \underline{d} u(x)$. 
By \eqref{HypodiffOfMax} there exist $\alpha_i \ge 0$ and $(a_i, v_i) \in \underline{d} f_i(x)$, $i \in I$ such that
$$
  (a, v) = \sum_{i = 1}^k \alpha_i (f_i(x) - u(x), 0) + \sum_{i = 1}^k \alpha_i (a_i, v_i),
  \qquad \sum_{i = 1}^k \alpha_i = 1.
$$
With the use of the amenability of $\underline{d} f_i(x)$ on $C$ one gets that
\begin{multline*}
  u(y) - u(x) = \max_{i \in I} \big( f_i(y) - u(x) \big) 
  \ge \max_{i \in I} \big( f_i(x) - u(x) + a_i + \langle v_i, y - x \rangle \big) \\
  \ge \sum_{i = 1}^k \alpha_i \big( f_i(x) - u(x) + a_i + \langle v_i, y - x \rangle \big)
  = a + \langle v, y - x \rangle
\end{multline*}
for any $y \in \mathcal{H}$, which implies the required result. 
\end{proof}

In the smooth case the rate of convergence of gradient methods for convex minimization is typically estimated
under the assumption that the gradient of the objective function is globally Lipschitz continuous (cf.~\cite{Nesterov}).
Therefore, it is natural to expect that in order to estimate the rate of convergence of the MHD in the nonsmooth case
we have to utilise a generalization of this assumption.

Recall that if a function $f$ is differentiable, and its gradient is globally Lipschitz continuous with Lipschitz
constant $L$, then
$$
  \big| f(y) - f(x) - \langle \nabla f(x), y - x \rangle \big| \le \frac{L}{2} \| y - x \|^2
  \quad \forall x, y \in \mathcal{H}
$$
(see, e.g., \cite[Lemma~1.2.3]{Nesterov}). We use this inequality as a basis for the generalization of the Lipschitz
continuity assumption to the nonsmooth case.

\begin{definition} \label{Def_HypodiffLipschitz}
Let $C \subseteq \mathcal{H}$ be a nonempty set. One says that a hypodifferential mapping $\underline{d} f(\cdot)$ is a
\textit{Lipschitzian approximation} of the function $f$ on the set $C$ with Lipschitz constant $L > 0$, if
$$
  \Big| f(y) - f(x) 
  - \max_{(a, v) \in \underline{d} f(x)} \big( a + \langle v, y - x \rangle \big) \Big|
  \le \frac{L}{2} \| y - x \|^2.
$$
for all $x, y \in C$.
\end{definition}

From the proof of Proposition~\ref{Prp_AmenabilitySumMax}, it follows that the property of being a
Lipschitzian approximation is preserved under addition and pointwise maximum (see \eqref{SumHypoDiffApproxEstimate} and
\eqref{MaxHypoDiffApproximEstimate}). Namely, the following result holds true.

\begin{proposition}
Let convex functions $f_i \colon \mathcal{H} \to \mathbb{R}$ be hypodifferentiable, and 
$\underline{d} f_i(\cdot)$ be their hypodifferential mappings, $i \in I = \{ 1, \ldots, k \}$. Suppose that for any 
$i \in I$ the mapping $\underline{d} f_i(\cdot)$ is a Lipschitzian approximation of the function $f_i$ on a set 
$C \subseteq \mathcal{H}$ with Lipschitz constant $L_i > 0$. Then the hypodifferential mapping
\eqref{HypodiffOfLinComb} is a Lipschitzian approximation of the function $g = \sum_{i = 1}^k \lambda_i f_i$ on the set
$C$ with Lipschitz constant $L \le \sum_{i = 1}^k | \lambda_i | L_i$ (here $\lambda_i \in \mathbb{R}$), and
\eqref{HypodiffOfMax} is a Lipschitzian approximation of the function $u = \max_{1 \le i \le k} f_i$ on the set $C$
with Lipschitz constant $L \le \max_{1 \le i \le k} L_i$.
\end{proposition}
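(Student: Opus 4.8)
The plan is to run the proof of Proposition~\ref{Prp_AmenabilitySumMax} again, but to replace the infinitesimal remainders $o_i(\alpha)$ by the finite quadratic bounds furnished by Definition~\ref{Def_HypodiffLipschitz}. Fix $x, y \in C$ and abbreviate $M_i := \max_{(a,v) \in \underline{d} f_i(x)} \big( a + \langle v, y - x \rangle \big)$ for $i \in I$; the hypothesis then reads $\big| f_i(y) - f_i(x) - M_i \big| \le \tfrac{L_i}{2}\|y-x\|^2$. In each case the quantity to be estimated is exactly $\big| h(y) - h(x) - \max_{(a,v) \in \underline{d} h(x)}(a + \langle v, y-x\rangle)\big|$ for $h \in \{g, u\}$, and the bridge to the individual $M_i$ is provided by the two identities \eqref{HypodiffOfSumRepresentation} and \eqref{HypodiffOfMaxRepresentation}: these are purely algebraic in the increment, so they hold verbatim with $\alpha \Delta x$ replaced by $y - x$.

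For the sum I would first invoke \eqref{HypodiffOfSumRepresentation} to get $\max_{(a,v)\in\underline{d}g(x)}(a+\langle v, y-x\rangle) = \sum_i \lambda_i M_i$, whence $g(y) - g(x) - \sum_i \lambda_i M_i = \sum_i \lambda_i\big[(f_i(y) - f_i(x)) - M_i\big]$; the triangle inequality and the per-index bound then give $\sum_i |\lambda_i|\tfrac{L_i}{2}\|y-x\|^2$, i.e. $L \le \sum_i |\lambda_i| L_i$. For the maximum I would reuse the bookkeeping behind \eqref{MaxHypoDiffApproximEstimate}: set $c_i = f_i(x) - u(x) + M_i$ and $d_i = (f_i(y) - f_i(x)) - M_i$, so that $c_i + d_i = f_i(y) - u(x)$ and hence $\max_{i}(c_i + d_i) = u(y) - u(x)$, while \eqref{HypodiffOfMaxRepresentation} identifies $\max_i c_i$ with $\max_{(a,v)\in\underline{d}u(x)}(a+\langle v, y-x\rangle)$. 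Feeding this into the sandwich \eqref{MinMaxNumberInequal} yields $\min_i d_i \le u(y) - u(x) - \max_{(a,v)\in\underline{d}u(x)}(\cdots) \le \max_i d_i$, and since $|d_i| \le \tfrac{L_i}{2}\|y-x\|^2 \le \tfrac{1}{2}\big(\max_j L_j\big)\|y-x\|^2$, the two-sided estimate closes with $L \le \max_i L_i$.

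The step I expect to be the main obstacle is the sign bookkeeping in the sum. The identity $\max_{(a,v)\in\underline{d}g(x)}(\cdots) = \sum_i \lambda_i M_i$ rests on \eqref{HypodiffOfSumRepresentation}, whose derivation pulls each $\lambda_i$ through a maximum and therefore needs $\lambda_i \ge 0$; for $\lambda_i < 0$ the maximum over $\lambda_i \underline{d}f_i(x)$ collapses to a minimum, and the one-sided bound on $M_i$ no longer controls it. Since Definition~\ref{Def_HypodiffLipschitz} is genuinely two-sided, the nonnegative-weight case goes through cleanly as above; for general $\lambda_i \in \mathbb{R}$ I would either restrict attention to $\lambda_i \ge 0$ or add the symmetric hypothesis that $-\underline{d}f_i$ is a Lipschitzian approximation of $-f_i$, this being the only place where more than a mechanical substitution of $\tfrac{L_i}{2}\|y-x\|^2$ for $o_i(\alpha)$ is required.
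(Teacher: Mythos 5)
Your proof is correct and follows essentially the same route as the paper's: for the sum, the paper likewise applies \eqref{HypodiffOfSumRepresentation} (with $\alpha \Delta x$ replaced by $y - x$) together with the triangle inequality, and for the maximum it introduces $\omega_i(y,x)$ in \eqref{HypodiffMaxIncrementApprox} (your $d_i$) and feeds it into the sandwich \eqref{MinMaxNumberInequal} with exactly your choice of $c_i$, arriving at the same constants $\sum_{i=1}^k |\lambda_i| L_i$ and $\max_{1 \le i \le k} L_i$.

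One substantive remark: the obstacle you flag at $\lambda_i < 0$ is genuine, and the paper itself glosses over it. The statement allows $\lambda_i \in \mathbb{R}$, yet the paper's proof invokes \eqref{HypodiffOfSumRepresentation}, which Proposition~\ref{Prp_AmenabilitySumMax} establishes only for $\lambda_i \ge 0$; for $\lambda_i < 0$ the support function of $\lambda_i \underline{d} f_i(x)$ equals $\lambda_i$ times a \emph{minimum} over $\underline{d} f_i(x)$, which the two-sided bound of Definition~\ref{Def_HypodiffLipschitz} does not control, exactly as you observe. A one-dimensional check confirms the failure: take $f_1(x) = |x|$ with $\underline{d} f_1(x) = \co\{ (x - |x|, 1), (-x - |x|, -1) \}$, which is an exact (hence Lipschitzian with any $L_1 > 0$) approximation of $f_1$, and $\lambda_1 = -1$; then at $y = x$ the approximation error of $\lambda_1 \underline{d} f_1$ for $g = -f_1$ equals $2|x| > 0$, so no quadratic bound can hold. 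Thus your restriction to $\lambda_i \ge 0$ (or the added symmetric hypothesis on $-\underline{d} f_i$) is the honest repair, and with it your argument is a faithful reproduction of the paper's proof.
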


\begin{proof}
Fix any $x, y \in C$. With the use of \eqref{HypodiffOfSumRepresentation} and Def.~\ref{Def_HypodiffLipschitz} one
obtains that
\begin{align*}
  \big| g(y) - g(x) &- \max_{(a, v) \in \underline{d} g(x)} (a + \langle v, y - x \rangle) \big| 
  \\
  &\le \sum_{i = 1}^k 
  |\lambda_i| \big| f_i(y) - f_i(x) - \max_{(a, v) \in \underline{d} f_i(x)} (a + \langle v, y - x \rangle \big|
  \\
  &\le \sum_{i = 1}^k |\lambda_i| \frac{L_i}{2} \| y - x \|^2.
\end{align*}
Therefore the hypodifferential mapping \eqref{HypodiffOfLinComb} is a Lipschitzian approximation of the function 
$g = \sum_{i = 1}^k \lambda_i f_i$ on the set $C$ with Lipschitz constant $L \le \sum_{i = 1}^k | \lambda_i | L_i$.

To prove the assertion for the function $u$, denote
\begin{equation} \label{HypodiffMaxIncrementApprox}
  \omega_i(y, x) = f_i(y) - f_i(x) - \max_{(a, v) \in \underline{d} f_i(x)} (a + \langle v, y - x \rangle).
\end{equation}
By definition one has
\begin{align*}
  u(y) - u(x) &= \max_{i \in I} (f_i(y) - u(x)) \\
  &= \max_{i \in I} \Big( f_i(x) - u(x) 
  + \max_{(a, v) \in \underline{d} f_i(x)} (a + \langle v, y - x \rangle) + \omega_i(y, x) \Big).
\end{align*}
Subtracting $\max_{(a, v) \in \underline{d} u(x)} (a + \langle v, y - x \rangle)$ 
(see~\eqref{HypodiffOfMaxRepresentation}), and applying inequality \eqref{MinMaxNumberInequal} with 
$c_i = f_i(x) - u(x) + \max_{(a, v) \in \underline{d} f_i(x)} (a + \langle v, y - x \rangle)$, and
$d_i = \omega_i(y, x)$ one obtains that
$$
  \min_{i \in I} \omega_i(y, x) \le u(y) - u(x) - \max_{(a, v) \in \underline{d} u(x)} (a + \langle v, y - x \rangle)
  \le \max_{i \in I} \omega_i(y, x).
$$
Therefore
$$
  \big| u(y) - u(x) - \max_{(a, v) \in \underline{d} u(x)} (a + \langle v, y - x \rangle) \big| 
  \le \max_{i \in I} |\omega_i(y, x)|.
$$
From \eqref{HypodiffMaxIncrementApprox} and the fact that $\underline{d} f_i(\cdot)$ is a Lipschitzian approximation of
the function $f_i$ on $C$ with Lipschitz constant $L_i$ it follows that
$$
  |\omega_i(y, x)| \le \frac{L_i}{2} \| y - x \|^2, \quad
  \max_{i \in I} |\omega_i(y, x)| \le \frac{\max_{i \in I} L_i}{2} \| y - x \|^2,
$$
which implies the required result.
\end{proof}

Now, we can obtain an upper estimate of the rate of convergence of the MHD that coincides with the upper estimate of
the rate of convergence of the standard gradient method in the convex case (see, e.g., \cite[Theorem~2.1.14]{Nesterov}).
This result is not surprising since in the smooth case the MHD is reduced to the gradient method with Armijo's
step-size rule. Let us note that the proof of the following theorem is a straightforward modification of the proof of
the corresponding result for gradient methods to the nonsmooth case.

\begin{theorem} \label{Theorem_MHD_RateOfConvergence}
Let $f$ be a closed convex function, the set 
\[
  S_0 = \{ x \in \mathcal{H} \mid f(x) \le f(x_0) \}
\]
be bounded, and let the continuous hypodifferential mapping $\underline{d} f(\cdot)$ be amenable and bounded on the set
$S_0$. Suppose also that $\underline{d} f(\cdot)$ is a Lipschitzian approximation of $f$ on the set 	
$S_\varepsilon = \{ x \in \mathcal{H} \mid \dist(x, S_0) \le \varepsilon \}$ for some $\varepsilon > 0$, and a sequence
$\{ x_n \}$ is generated by the MHD. Then there exists $\widehat{\alpha} > 0$ such that $\alpha_n \ge \widehat{\alpha}$
for all $n \in \mathbb{N}$, and the following inequality holds true:
\begin{equation} \label{ConvexCase_RateOfConvergence}
  f(x_n) - f(x^*) \le \frac{(f(x_0) - f(x^*)) R^2}{R^2 + (f(x_0) - f(x^*))\widehat{\alpha} \sigma n} 
  = \mathcal{O}\left( \frac{1}{n} \right) \quad \forall n \in \mathbb{N},
\end{equation}
Here $x^*$ is a point of global minimum of $f$, and $R = 1 + \sup_{n \ge 0} \| x_n - x^* \| < + \infty$.
\end{theorem}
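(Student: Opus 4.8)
The plan is to split the argument into two essentially independent parts: first, to produce a \emph{uniform} lower bound $\widehat{\alpha}$ on the Armijo step sizes, and second, to run the classical $\mathcal{O}(1/n)$ recursion for convex descent methods, with the role of the gradient played by the minimal-norm hypodifferential element $(a_n, v_n)$. Before either part, I would record the ingredients supplied by the hypotheses: since $f$ is closed convex with a bounded (hence weakly compact) sublevel set $S_0$, it attains its minimum at some $x^*\in S_0$; since $\{x_n\}\subset S_0$ by the descent property $f(x_{n+1})<f(x_n)\le f(x_0)$, the quantity $R=1+\sup_n\|x_n-x^*\|$ is finite; and boundedness of $\underline{d}f(\cdot)$ on $S_0$ gives a constant $M$ with $\|(a,v)\|\le M$ for all $(a,v)\in\underline{d}f(x)$, $x\in S_0$. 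Throughout I write $\Delta_n=f(x_n)-f(x^*)\ge 0$ and abbreviate $p_n=(a_n,v_n)$.

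For the step-size bound, which I expect to be the crux, the key observation is that $p_n$ is the metric projection of the origin onto the weakly compact convex set $\underline{d}f(x_n)$, so $\langle p_n,(a,v)\rangle\ge\|p_n\|^2$, i.e. $a_n a+\langle v_n,v\rangle\ge\|p_n\|^2$, for every $(a,v)\in\underline{d}f(x_n)$. Combined with $a\le 0$ (which follows from $\max_{(a,v)}a=0$ in \eqref{CodiffApprox_CorrectSigns}), a short computation gives, for every $(a,v)$ and every $\alpha$ with $1+\alpha a_n\ge 0$,
$$a-\alpha\langle v,v_n\rangle \le a(1+\alpha a_n)-\alpha\|p_n\|^2\le -\alpha\|p_n\|^2,$$
hence $\max_{(a,v)\in\underline{d}f(x_n)}(a-\alpha\langle v,v_n\rangle)\le-\alpha\|p_n\|^2$ whenever $\alpha\le 1/M$ (using $|a_n|\le M$). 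Feeding this into the Lipschitzian approximation of $f$ on $S_\varepsilon$ and using $\|v_n\|\le\|p_n\|$ yields
$$f(x_n-\alpha v_n)-f(x_n)\le -\alpha\|p_n\|^2+\tfrac{L}{2}\alpha^2\|p_n\|^2,$$
valid as long as $x_n-\alpha v_n\in S_\varepsilon$, i.e. for $\alpha\le\varepsilon/M$. The Armijo test $f(x_n-\alpha v_n)-f(x_n)\le-\alpha\sigma\|p_n\|^2$ is therefore satisfied for all $\alpha\le\tau:=\min\{1/M,\varepsilon/M,2(1-\sigma)/L\}$, so backtracking stops with $\alpha_n\ge\gamma\tau=:\widehat{\alpha}>0$, independent of $n$.

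For the convergence rate I would first use amenability of $\underline{d}f(\cdot)$ on $S_0$ with $x=x_n$, $y=x^*$ to turn $p_n$ into a global affine minorant: $f(x^*)-f(x_n)\ge a_n+\langle v_n,x^*-x_n\rangle$, which rearranges (using $a_n\le 0$ and Cauchy--Schwarz in $\mathbb{R}\times\mathcal{H}$) to
$$\Delta_n\le |a_n|+\|v_n\|\,\|x_n-x^*\|\le\|p_n\|\sqrt{1+\|x_n-x^*\|^2}\le R\,\|p_n\|,$$
so $\|p_n\|^2\ge\Delta_n^2/R^2$. Together with the per-step decrease $\Delta_n-\Delta_{n+1}\ge\widehat{\alpha}\sigma\|p_n\|^2$ coming from Armijo and the step-size bound, this gives $\Delta_n-\Delta_{n+1}\ge(\widehat{\alpha}\sigma/R^2)\Delta_n^2$. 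Dividing by $\Delta_n\Delta_{n+1}$ and using monotonicity of $\{\Delta_n\}$ produces $1/\Delta_{n+1}-1/\Delta_n\ge\widehat{\alpha}\sigma/R^2$; telescoping from $0$ to $n$ and inverting yields exactly \eqref{ConvexCase_RateOfConvergence}.

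The only genuinely delicate point is the step-size bound: everything hinges on replacing the max-type increment approximation by the single linear minorant $-\alpha\|p_n\|^2$, and the projection identity for $p_n$ together with the sign condition $a\le 0$ is precisely what makes this possible; the remaining care is purely bookkeeping to keep the backtracking iterates inside $S_\varepsilon$ so that the Lipschitzian approximation applies. The rate recursion in the last paragraph is the standard convex-descent argument and should go through essentially verbatim.
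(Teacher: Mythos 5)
Your proposal is correct and follows essentially the same route as the paper's proof: the projection characterization of $(a_n, v_n)$ together with the sign condition $a \le 0$ yields the descent estimate on the hypodifferential approximation, the Lipschitzian approximation on $S_\varepsilon$ gives the uniform Armijo step-size bound $\widehat{\alpha} > 0$, amenability gives $\Delta_n \le R \| (a_n, v_n) \|$, and the standard telescoping of $1/\Delta_n$ produces \eqref{ConvexCase_RateOfConvergence}. The only cosmetic difference is that your unified one-line estimate $a - \alpha \langle v, v_n \rangle \le a(1 + \alpha a_n) - \alpha \| p_n \|^2 \le -\alpha \| p_n \|^2$ for $1 + \alpha a_n \ge 0$ replaces the paper's two-case analysis ($a_n = 0$ versus $a_n < 0$) with the convexity interpolation of $\Phi_n$ through $\Phi_n(0) = 0$ --- a minor streamlining of the same idea, not a different argument.
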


\begin{proof}
At first, let us note that $f$ attains a global minimum by \cite[Prop.~II.1.2]{EkelandTemam}, since $f$ is closed, 
the set $S_0$ is bounded, and $\mathcal{H}$ is a Hilbert space. Note also that 
$R = 1 + \sup_{n \ge 0} \| x_n - x^* \|$ is finite due to the facts that $\{ x_n \} \subset S_0$, and $S_0$ is bounded
(the validity of the inclusion follows from the inequality $f(x_{n + 1}) < f(x_n)$;
see~\eqref{RelaxationAlongMHD_Direction}).

Denote $\Phi_n(y) = \max_{(a, v) \in \underline{d} f(x_n)} (a + \langle v, y \rangle)$. Applying the necessary and
sufficient condition for a minimum of a convex function on a convex set \cite[Proposition~II.2.1]{EkelandTemam} one
obtains that
\begin{equation} \label{SeparationThrm_MHD}
  a_n a + \langle v_n, v \rangle \ge \| (a_n, v_n) \|^2 \quad \forall (a, v) \in \underline{d} f(x_n),
\end{equation}
where the pair $(a_n, v_n)$ is computed on Step~3 of the MHD. If $a_n = 0$, then taking into account the fact that 
$a \le 0$ for all $(a, v) \in \underline{d} f(x_n)$ (see~\eqref{CodiffApprox_CorrectSigns}) one gets that 
$$
  \Phi_n(- v_n) \le \max_{(a, v) \in \underline{d} f(x_n)} \langle v, - v_n \rangle \le - \| (a_n, v_n) \|^2,
$$
which with the use of the convexity of $\Phi_n$ and the equality $\Phi_n(0) = 0$
(see~\eqref{CodiffApprox_CorrectSigns}) implies that
\begin{equation} \label{HypodiffDecay_MHD_Simple}
  \Phi_n( - \alpha v_n) \le \alpha \Phi_n( - v_n) + (1 - \alpha) \Phi_n(0)
  \le - \alpha \| (a_n, v_n) \|^2 \quad \forall \alpha \in [0, 1].
\end{equation}
On the other hand, if $a_n < 0$, then dividing \eqref{SeparationThrm_MHD} by $a_n$, and taking the maximum over all 
$(a, v) \in \underline{d} f(x_n)$ one obtains
$$
  \Phi_n\left( \frac{1}{a_n} v_n \right) \le - \frac{1}{|a_n|} \| (a_n, v_n) \|^2.
$$
Applying the convexity of $\Phi_n$ and the equality $\Phi_n(0) = 0$ again one obtains that
$$
  \Phi_n\left( \frac{\alpha}{a_n} v_n \right) \le \alpha \Phi_n\left( \frac{1}{a_n} v_n \right) 
  \le - \frac{\alpha}{|a_n|} \| (a_n, v_n) \|^2 \quad \forall \alpha \in [0, 1].
$$
Combining this inequality with \eqref{HypodiffDecay_MHD_Simple} one gets that in either case
\begin{equation} \label{HypodiffDecay_MHD}
  \Phi_n( - \alpha v_n) \le - \alpha \| (a_n, v_n) \|^2 
  \quad \forall \alpha \in \left[ 0, \frac{1}{|a_n|} \right],
\end{equation}
where $1 / 0 = 1$ by definition. Observe that the sequence $\{ a_n \}$ is bounded by virtue of the facts that 
$\{ x_n \} \subset S_0$, and the hypodifferential mapping $\underline{d} f(\cdot)$ is bounded on $S_0$. Therefore, there
exists $\varkappa \in (0, 1]$ such that $|a_n|^{-1} > \varkappa > 0$ for all $n \in \mathbb{N}$. Furthermore, from the
boundedness of $\underline{d} f(\cdot)$ on $S_0$ it follows that there exists $K > 0$ such that $\| v_n \| \le K$ for
all $n \in \mathbb{N}$. Hence, in particular, 
$x_n - \alpha v_n \in S_{\varepsilon} = \{ x \in \mathcal{H} \mid \dist(x, S_0) \le \varepsilon \}$ for any 
$\alpha \in [0, \varepsilon / K]$ and $n \in \mathbb{N}$.

Recall that $\underline{d} f(\cdot)$ is a Lipschitzian approximation of $f$ on $S_\varepsilon$. Therefore there exists
$L > 0$ such that
$$
  f(x_n - \alpha v_n) - f(x_n) - \Phi_n(-\alpha v_n) \le \frac{L \alpha^2}{2} \| v_n \|^2
  \quad \forall \alpha \in \left[ 0, \frac{\varepsilon}{K} \right].
$$
Hence and from \eqref{HypodiffDecay_MHD} it follows that
$$
  f(x_n - \alpha v_n) - f(x_n) \le \left( - \alpha + \frac{L \alpha^2}{2} \right) \| (a_n, v_n) \|^2 
  \quad \forall \alpha \in \left[ 0, \min\left\{ \varkappa, \frac{\varepsilon}{K} \right\} \right].
$$
Consequently, as it is easy to see, there exists $\widehat{\alpha} > 0$ such that
$$
  f(x_n - \widehat{\alpha} v_n) - f(x_n) \le - \widehat{\alpha} \sigma \| (a_n, v_n) \|^2
  \quad \forall n \in \mathbb{N}
$$
(one can choose any $\widehat{\alpha} \le \min\{ 2 ( 1 - \sigma) / L, \varkappa, \varepsilon / K \}$), which implies
that 
\begin{equation} \label{MHD_SufficientDescentInequal}
  f(x_{n + 1}) - f(x_n) \le - \widehat{\alpha} \sigma \| (a_n, v_n) \|^2, 
  \quad \alpha_n \ge \widehat{\alpha} \quad \forall n \in \mathbb{N},
\end{equation}
where $x_{n + 1} = x_n - \alpha_n v_n$, and $\alpha_n$ is computed on Step~4 of the MHD. Note that one can 
set $\widehat{\alpha} = \gamma^k$ for a sufficiently large $k \in \mathbb{N}$. Then $\alpha_n = \gamma^{k_n}$ with
$k_n \le k$.

Denote $\Delta_n = f(x_n) - f(x^*)$, where $x^*$ is a point of global minimum of the function $f$. From the facts that
the hypodifferential mapping $\underline{d} f(\cdot)$ is amenable, and $(a_n, v_n) \in \underline{d} f(x_n)$ (see
Step~3 of the MHD) it follows that
$$
  \Delta_n \le - a_n + \langle v_n, x_n - x^* \rangle \le \| (a_n, v_n) \| \big( 1 + \| x_n - x^* \| \big)
  \le R \| (a_n, v_n) \|
$$
(recall that $R = 1 + \sup_{n \ge 0} \| x_n - x^* \|$). Adding and subtracting $f(x^*)$ in
\eqref{MHD_SufficientDescentInequal}, and estimating $\| (a_n, v_n) \|^2$ with the use of the inequality above one gets
that
$$
  \Delta_{n + 1} \le \Delta_n - \frac{\widehat{\alpha} \sigma}{R^2} \Delta_n^2.
$$
Dividing this inequality by $\Delta_n \cdot \Delta_{n + 1}$ one obtains
$$
  \frac{1}{\Delta_{n + 1}} \ge \frac{1}{\Delta_n} + \frac{\widehat{\alpha} \sigma}{R^2} \frac{\Delta_n}{\Delta_{n + 1}}
  \ge \frac{1}{\Delta_n} + \frac{\widehat{\alpha} \sigma}{R^2}
$$
(note that $\Delta_{n + 1} \le \Delta_n$ due to the fact that $f(x_{n + 1}) \le f(x_n)$). Summing up these inequalities
one gets
$$
  \frac{1}{\Delta_{n + 1}} \ge \frac{1}{\Delta_0} + \frac{\widehat{\alpha} \sigma}{R^2} (n + 1) 
  \quad \forall n \in \mathbb{N},
$$
which implies that \eqref{ConvexCase_RateOfConvergence} is valid.
\end{proof}

\begin{remark}
Let us point out how $\widehat{\alpha}$ from the theorem above depends on the problem data. Let $K > 0$ be such that
$|a| \le K$ and $\| v \| \le K$ for all $(a, v) \in \underline{d} f(x)$ and $x \in S_0$. Then, as it was pointed out in
the proof, one can set
$$
  \widehat{\alpha} = \min\left\{ \frac{\min\{ 1, \varepsilon \}}{K}, \frac{2 (1 - \sigma)}{L} \right\}.
$$
Furthermore, if $\varepsilon = + \infty$, then it is sufficient to suppose that $K > 0$ is such that $|a| \le K$ for
any $(a, v) \in \underline{d} f(x)$ and $x \in S_0$. Note that in the smooth case one can define 
$\underline{d} f(\cdot) = \{ (0, \nabla f(\cdot)) \}$, which implies that $\widehat{\alpha} = 2(1 - \sigma) / L$,
provided the gradient $\nabla f(\cdot)$ is globally Lipschitz continuous. Observe also that the theorem above remains
valid in the case when instead of Armijo's step-size rule one finds $\alpha_n$ via the minimization of the function
$\alpha \mapsto f(x_n - \alpha v_n)$.
\end{remark}

\begin{remark}
Note that the rate of convergence of the MHD is better than the optimal rate of convergence of subgradient methods
$\mathcal{O}(1 / \sqrt{n})$ \cite[Sect.~3.2]{Nesterov}. This is obviously due to the fact the oracle utilised by the
MHD provides much more information about the objective function than just a single subgradient. On the other hand, each
call of this oracle is significantly more expensive than the call of the oracle used in subgradient methods. Let us
also note that one can utilise Nesterov's acceleration technique \cite[Sect.~2.2]{Nesterov} to design a faster method
for minimizing hypodifferentiable convex functions than the MHD. However, this method must accumulate the Minkowski sum
of the form $a_1 \underline{d} f(y_1) + a_2 \underline{d} f(y_2) + \ldots$ with some $a_i \in \mathbb{R}$ (cf. 
the optimal gradient method in \cite{Nesterov}), which is unreasonable both in terms of memory consumption and
computational effort. That is why we do not present an accelerated version of the MHD here.
\end{remark}

\begin{remark}
Let $U \subset \mathcal{H}$ be a bounded open set such that $f$ is Lipschitz continuous on $U$. 
By \cite[Example~4]{Dolgopolik_MCD} for any $x \in U$ one has $f(x) = \max_{(a, v) \in C} (a + \langle v, x \rangle)$,
where
\begin{equation} \label{AffineSupportOfConvexFunction}
  C = \big\{ (f(z) - \langle v, z \rangle, v) \in \mathbb{R} \times \mathcal{H} \bigm| 
  v \in \partial f(z), \: z \in U \big\},
\end{equation}
and $\partial f(z)$ is the subdifferential of $f$ at $z$ in the sense of convex analysis. Therefore, 
for any $x, y \in U$ one has
$$
  f(y) - f(x) = \max_{(a, v) \in C} (a + \langle v, y \rangle) - f(x) 
  = \max_{(a, v) \in C} (a - f(x) + \langle v, x \rangle + \langle v, y - x \rangle)
$$
or, equivalently, $f(y) - f(x) = \max_{(a, v) \in \underline{d} f(x)} (a + \langle v, y - x \rangle)$, where
\begin{equation} \label{Hypodiff_via_Subdiff}
  \underline{d} f(x) = \cl\co\Big\{ 
  \big( f(z) - f(x) - \langle v, z - x \rangle, v \big) \in \mathbb{R} \times \mathcal{H} \Bigm|
  v \in \partial f(z), \: z \in U \Big\}
\end{equation}
(see~\eqref{AffineSupportOfConvexFunction}). Applying the fact that $f$ is Lipschitz continuous on $U$ one can verify
that the multifunction $\underline{d} f(\cdot)$ is Hausdorff continuous and bounded on $U$. Note that this
hypodifferential mapping is obviously a Lipschitzian approximation of $f$ on $U$. Furthermore, observe that from the
inequality $f(y) - f(z) \ge \langle v, y - z \rangle$, where $y, z \in U$ and $v \in \partial f(z)$, it follows that 
$$
  f(y) - f(x) \ge f(z) - f(x) - \langle v, z - x \rangle + \langle v, y - x \rangle
  \quad \forall x, y, z \in U \: \forall v \in \partial f(z).
$$
With the use of this inequality and \eqref{Hypodiff_via_Subdiff} one can check that the hypodifferential mapping
\eqref{Hypodiff_via_Subdiff} is amenable on $U$. Thus, if the sublevel set 
$S_0 = \{ x \in \mathcal{H} \mid f(x) \le f(x_0) \}$ is bounded, and the exists $\varepsilon > 0$ such that $f$ is
Lipschitz continuous on $S_{\varepsilon}$, then there exists a hypodifferential mapping of $f$ (of the form
\eqref{Hypodiff_via_Subdiff}) satisfying the assumptions of Theorem~\ref{Theorem_MHD_RateOfConvergence}. In particular,
if $\mathcal{H}$ is finite dimensional, then the boundedness of the sublevel set $S_0$ guarantees that there exists a
hypodifferential mapping of the function $f$ satisfying the assumptions of Theorem~\ref{Theorem_MHD_RateOfConvergence}.
Thus, at least from the theoretical point of view the assumptions of this theorem are not very restrictive.
\end{remark}

\section{Codifferential calculus and global piecewise af\-fine optimization}
\label{Sect_PiecewiseAffineCase}

The main goal of this section is to demonstrate that the method of codifferential descent finds a point of
\textit{global} minimum of a nonconvex piecewise affine function in a finite number of steps. To this end, we derive new
necessary and sufficient conditions for a global minimum of a piecewise affine functions in terms of its
codifferential, which significantly differ from the ones obtained in \cite{Polyakova}, and develop a modification of
the MCD call \textit{the method of global codifferential descent}.

\subsection{Global codifferential and optimality conditions}

From this point onwards we suppose that $\mathcal{H} = \mathbb{R}^d$, and write $\mathbb{R}^{d + 1}$ instead of
$\mathbb{R} \times \mathbb{R}^d$. We start with an auxiliary result for polyhedral convex functions.

\begin{lemma} \label{Lemma_PolyhedralFunc_Nonnegative}
Let a function $f \colon \mathbb{R}^d \to \mathbb{R}$ have the form 
$f(x) = \max_{i \in I} (a_i + \langle v_i, x \rangle)$ for some $(a_i, v_i) \in \mathbb{R}^{d + 1}$,
where $I = \{ 1, \ldots, k \}$. Then $f(x) \ge 0$ for all $x \in \mathbb{R}^d$ if and only if either 
$0 \in C = \co\{ (a_i, v_i) \mid i \in I \}$ or $f$ is bounded below and $a_0 > 0$, where
\begin{equation} \label{PolyConvex_MinHypoGrad}
  \{ (a_0, v_0) \} = \argmin\big\{ \| (a, v) \|^2 \mid (a, v) \in C \big\}.
\end{equation}
\end{lemma}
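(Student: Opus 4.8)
The plan is to reformulate the condition $f \ge 0$ in terms of the polytope $C$ and the metric projection of the origin onto it. Writing $y=(a,v)$ and using $a+\langle v,x\rangle=\langle (a,v),(1,x)\rangle$ in $\mathbb{R}^{d+1}$, observe that $f(x)=\max_{(a,v)\in C}(a+\langle v,x\rangle)$, since a linear functional attains its maximum over $C$ at a generator $(a_i,v_i)$. I would first record two standard facts. First, $f$ is bounded below if and only if $0\in\co\{v_i\mid i\in I\}$, equivalently $C\cap(\mathbb{R}\times\{0\})\neq\emptyset$: if $\sum_i\lambda_i v_i=0$ for some $\lambda$ in the unit simplex, then $f(x)\ge\sum_i\lambda_i(a_i+\langle v_i,x\rangle)=\sum_i\lambda_i a_i$ is constant, whereas if $0\notin\co\{v_i\}$ a separating direction drives $f$ to $-\infty$. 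Second, when $0\notin C$ the unique minimum-norm point $(a_0,v_0)$ of \eqref{PolyConvex_MinHypoGrad} satisfies the projection (variational) inequality $a\,a_0+\langle v,v_0\rangle\ge\|(a_0,v_0)\|^2>0$ for every $(a,v)\in C$.

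For sufficiency, if $0\in C$ I would write $0=\sum_i\lambda_i(a_i,v_i)$ and get $f(x)\ge\sum_i\lambda_i(a_i+\langle v_i,x\rangle)=0$ at once. If instead $f$ is bounded below and $a_0>0$ (which already forces $0\notin C$, since $0\in C$ gives $a_0=0$), then boundedness below supplies a point $(\bar a,0)\in C$; substituting it into the projection inequality yields $\bar a\,a_0\ge\|(a_0,v_0)\|^2>0$, hence $\bar a>0$. For every $x$ the convex combination defining $(\bar a,0)$ then gives $f(x)\ge\bar a>0$, so $f\ge 0$.

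For necessity, assume $f\ge 0$; in particular $f$ is bounded below. If $0\in C$ the first alternative holds, so suppose $0\notin C$ and let $(a_0,v_0)\neq 0$ be the projection. It remains to prove $a_0>0$, which I would do by excluding $a_0\le 0$. If $a_0<0$, I would evaluate $f$ at $x^*=v_0/a_0$: since $(1,x^*)=a_0^{-1}(a_0,v_0)$ with $a_0^{-1}<0$, the negative factor turns the maximizing functional into a minimizing one, and the projection inequality gives $f(x^*)=a_0^{-1}\min_{(a,v)\in C}\langle(a,v),(a_0,v_0)\rangle\le a_0^{-1}\|(a_0,v_0)\|^2<0$, contradicting $f\ge 0$. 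If $a_0=0$, the projection inequality reads $\langle v,v_0\rangle\ge\|v_0\|^2>0$ for all $(a,v)\in C$, so $0$ is strictly separated from $\co\{v_i\}$ and $f$ is not bounded below, again a contradiction. Hence $a_0>0$, and the second alternative holds.

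The routine ingredients here are the projection theorem and the boundedness-below characterization. The conceptual crux, and the step I expect to require the most care, is the necessity direction: one must translate the separation of $0$ from $C$ in $\mathbb{R}^{d+1}$ back into genuine negativity of $f$ on $\mathbb{R}^d$, and in particular handle the borderline case $a_0=0$ separately, where the obstruction to $f\ge 0$ comes not from a single witness $x^*$ but from the failure of boundedness below.
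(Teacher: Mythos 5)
Your proof is correct, and while your necessity direction coincides with the paper's (excluding $a_0<0$ by evaluating $f$ at $a_0^{-1}v_0$, and excluding $a_0=0$ by showing $-v_0$ is a direction of unbounded descent), your sufficiency direction takes a genuinely different and more elementary route. The paper argues by contradiction: assuming $f^*=\inf f<0$, it invokes the description of the $\varepsilon$-subdifferential of a max of affine functions \cite[Example~XI.3.5.3]{HiriartUrruty} to manufacture points $(a_\varepsilon,0)\in C$ with $a_\varepsilon\to f^*$, uses closedness of $C$ to conclude $(f^*,0)\in C$, and then contradicts the minimality of $\|(a_0,v_0)\|$ via the segment joining $(f^*,0)$ to $(a_0,v_0)$. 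You instead prove directly that boundedness below of $f$ is equivalent to $0\in\co\{v_i\}$, i.e.\ to $C\cap(\mathbb{R}\times\{0\})\ne\emptyset$ (a recession-type fact proved by simplex combinations and strict separation), and then feed the resulting point $(\bar a,0)\in C$ into the variational inequality $a\,a_0+\langle v,v_0\rangle\ge\|(a_0,v_0)\|^2$, obtaining $\bar a\ge\|(a_0,v_0)\|^2/a_0>0$ and hence $f(x)\ge\bar a$ for all $x$. This avoids the $\varepsilon$-subdifferential machinery and the external reference entirely, and it even yields slightly more than the lemma asserts: under the second alternative $f$ is bounded below by the \emph{positive} constant $\|(a_0,v_0)\|^2/a_0$, not merely nonnegative. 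What the paper's heavier argument buys in exchange is the structural fact $(f^*,0)\in C$ for any bounded-below $f$ of this form, which is of independent interest but not needed for the equivalence. One small point to make explicit if you write this up: in the $a_0=0$ case, passing from $\langle v,v_0\rangle\ge\|v_0\|^2$ for all $(a,v)\in C$ to separation of $0$ from $\co\{v_i\}$ uses that every $v\in\co\{v_i\}$ arises as the second coordinate of some point of $C$ (take the same convex weights on the $(a_i,v_i)$) --- immediate, but worth a clause.
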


\begin{proof}
Let $f(x) \ge 0$ for all $x \in \mathbb{R}^d$. Arguing by reductio ad absurdum, suppose that $0 \notin C$, but 
$a_0 \le 0$. Applying the necessary and sufficient condition for a minimum of a convex function on a convex set
\cite[Proposition~II.2.1]{EkelandTemam} one obtains that
\begin{equation} \label{PolyhedralFunc_SeparationThrm}
  a_0 (a - a_0) + \langle v_0, v - v_0 \rangle \ge 0 \quad \forall (a, v) \in C.
\end{equation}
If $a_0 = 0$, then $v_0 \ne 0$ (otherwise $0 \in C$), and $\langle v, - v_0 \rangle \le - \| v_0 \|^2$ for any 
$(a, v) \in C$. Therefore, for all $\alpha \ge 0$ one has
\begin{equation} \label{PolyhedralFunc_UnboundedBelow}
  f(- \alpha v_0) = \max_{i \in I} (a_i + \langle v_i, - \alpha v_0 \rangle) \le
  \max_{i \in I} (a_i - \alpha \| v_0 \|^2) = f(0) - \alpha \| v_0 \|^2,
\end{equation}
which contradicts the assumption that $f$ is nonnegative.

If $a_0 < 0$, then dividing \eqref{PolyhedralFunc_SeparationThrm} by $a_0$ one obtains that
$$
  a + \left\langle v, \frac{1}{a_0} v_0 \right\rangle \le - \frac{1}{|a_0|} \| (a_0, v_0) \|^2 < 0 
  \quad \forall (a, v) \in C.
$$
Taking the maximum over all $(a, v) \in C$ one gets that $f(a_0^{-1} v_0) < 0$, which is impossible. Thus, $a_0 > 0$.

Let us prove the converse statement. If $0 \in C$, then for any $x \in \mathbb{R}^d$ one has
$f(x) = \max_{(a, v) \in C} (a + \langle v, x \rangle) \ge 0 + \langle 0, x \rangle = 0$, i.e. the function $f$ is
nonnegative. Arguing by reductio ad absurdum suppose now that $0 \notin C$, $f$ is bounded below, and $a_0 > 0$, but
there exists $x \in \mathbb{R}^d$ such that $f(x) < 0$.

Define $f^* = \inf_{x \in \mathbb{R}^d} f(x)$. By our assumptions $- \infty < f^* < 0$. Our aim is to show that
$(f^*, 0) \in C$. Then for any $\alpha \in [0, 1]$ one has $(1 - \alpha) (f^*, 0) + \alpha (a_0, v_0) \in C$. Setting
$\alpha = |f^*| / (|f^*| + a_0) \in (0, 1)$ one gets that $(0, \alpha v_0) \in C$, which is impossible due to the
definition of $(a_0, v_0)$ (see~\eqref{PolyConvex_MinHypoGrad}), and the fact 
that $\| (0, \alpha v_0) \|^2 < \| (a_0, v_0) \|^2$. 

For any $\varepsilon > 0$ there exists $x_{\varepsilon}$ such that $f(x_{\varepsilon}) < f^* + \varepsilon$. Hence by
definition $0 \in \partial_{\varepsilon} f(x_{\varepsilon})$, where $\partial_{\varepsilon} f(x_{\varepsilon})$ is 
the $\varepsilon$-subdifferential of $f$ at $x_{\varepsilon}$. By \cite[Example~XI.3.5.3]{HiriartUrruty} one has
$$
  \partial_{\varepsilon} f(x) =
  \big\{ v \in \mathbb{R}^d \bigm| \exists (a, v) \in C \colon a + \langle v, x \rangle \ge f(x) - \varepsilon \big\}.
$$
Consequently, for any $\varepsilon > 0$ there exists 
$a_{\varepsilon} \ge f(x_{\varepsilon}) - \varepsilon \ge f^* - \varepsilon$ such that
$(a_{\varepsilon}, 0) \in C$. Observe that for any $(a, 0) \in C$ one has $f(x) \ge a$ for all $x \in \mathbb{R}^d$,
which implies that $f^* \ge a$. Thus, $f^* \ge a_{\varepsilon} \ge f^* - \varepsilon$. Hence passing to the limit
as $\varepsilon \to 0$, and taking into account the fact that the set $C$ is closed one obtains that $(f^*, 0) \in C$.
\end{proof}

\begin{corollary} \label{Corollary_PolyhedralFunc_Nonnegative}
Let all assumptions of Lemma~\ref{Lemma_PolyhedralFunc_Nonnegative} be valid, and suppose that $f$ is bounded below.
Then $f(x) \ge 0$ for all $x \in \mathbb{R}^d$ if and only if $a_0 \ge 0$. 
\end{corollary}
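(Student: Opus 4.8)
The plan is to derive this corollary directly from Lemma~\ref{Lemma_PolyhedralFunc_Nonnegative}. Since $f$ is now assumed bounded below, the characterization of that lemma reads: $f(x) \ge 0$ for all $x$ if and only if either $0 \in C$ or $a_0 > 0$. Hence it suffices to show that, under the standing assumption that $f$ is bounded below, the disjunction ``$0 \in C$ or $a_0 > 0$'' is equivalent to the single inequality $a_0 \ge 0$. The only genuinely new work lies in the boundary case $a_0 = 0$.

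First I would treat the forward implication. Suppose $f(x) \ge 0$ for all $x$. By Lemma~\ref{Lemma_PolyhedralFunc_Nonnegative} either $0 \in C$ or $a_0 > 0$. In the latter case $a_0 \ge 0$ is immediate. In the former case, since $(a_0, v_0)$ is the minimum-norm element of $C$ (see~\eqref{PolyConvex_MinHypoGrad}) and $0 \in C$, one has $\| (a_0, v_0) \| \le \| 0 \| = 0$, so $(a_0, v_0) = (0, 0)$ and in particular $a_0 = 0 \ge 0$.

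For the converse, assume $a_0 \ge 0$. If $a_0 > 0$, then, using that $f$ is bounded below and $a_0 > 0$, the ``if'' part of Lemma~\ref{Lemma_PolyhedralFunc_Nonnegative} already yields $f(x) \ge 0$ for all $x$. The remaining case is $a_0 = 0$, and here I would show that necessarily $v_0 = 0$. Indeed, suppose $v_0 \ne 0$. Then the optimality condition~\eqref{PolyhedralFunc_SeparationThrm} for the min-norm point, with $a_0 = 0$, reduces to $\langle v_0, v \rangle \ge \| v_0 \|^2$ for all $(a, v) \in C$, so that, exactly as in the computation~\eqref{PolyhedralFunc_UnboundedBelow} in the proof of the lemma, $f(-\alpha v_0) \le f(0) - \alpha \| v_0 \|^2 \to -\infty$ as $\alpha \to +\infty$, contradicting the boundedness of $f$ below. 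Hence $v_0 = 0$, so $(a_0, v_0) = (0, 0) \in C$, and Lemma~\ref{Lemma_PolyhedralFunc_Nonnegative} gives $f(x) \ge 0$ for all $x$.

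The crux of the argument, and the one step beyond routine bookkeeping, is precisely this boundary case $a_0 = 0$: the lemma separates the two possibilities $0 \in C$ and $a_0 > 0$, whereas the corollary merges them into $a_0 \ge 0$. What makes this merging legitimate is exactly the hypothesis that $f$ is bounded below, since the estimate~\eqref{PolyhedralFunc_UnboundedBelow} shows that the configuration $a_0 = 0$, $v_0 \ne 0$ would force $f$ to tend to $-\infty$; ruling it out is the heart of the proof.
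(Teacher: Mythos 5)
Your proof is correct and takes essentially the same route as the paper's: both directions are reduced to Lemma~\ref{Lemma_PolyhedralFunc_Nonnegative}, with the forward direction handled by noting $0 \in C$ forces $(a_0, v_0) = (0,0)$, and the boundary case $a_0 = 0$, $v_0 \ne 0$ ruled out by the unboundedness estimate \eqref{PolyhedralFunc_UnboundedBelow}, contradicting boundedness below. The only cosmetic difference is that you re-derive that estimate from the optimality condition \eqref{PolyhedralFunc_SeparationThrm}, whereas the paper simply cites it from the lemma's proof.
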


\begin{proof}
If $f$ is nonnegative, then by Lemma~\ref{Lemma_PolyhedralFunc_Nonnegative} either $a_0 > 0$ or $0 \in C$. In the
latter case, by definition one has $(a_0, v_0) = (0, 0)$, i.e. $a_0 = 0$.

Suppose now that $a_0 \ge 0$. If $a_0 > 0$, then $f$ is nonnegative by Lemma~\ref{Lemma_PolyhedralFunc_Nonnegative}.
Therefore, suppose that $a_0 = 0$. If $v_0 = 0$, then $0 \in C$ and, once again, $f$ is nonnegative by
Lemma~\ref{Lemma_PolyhedralFunc_Nonnegative}. On the other hand, if $v_0 \ne 0$, then, as it was shown in the proof of
the lemma (see \eqref{PolyhedralFunc_UnboundedBelow}), $f$ is unbounded below, which contradicts our assumptions.
\end{proof}

\begin{remark}
Let us note that the assumption on the boundedness below of the function $f$ cannot be discarded from
Lemma~\ref{Lemma_PolyhedralFunc_Nonnegative}. A simple counterexample is 
the function $f(x) = a + \langle v, x \rangle$ with $a > 0$ and $v \ne 0$.
\end{remark}

Now we turn to the study of piecewise affine functions. At first, let us recall the definition of piecewise affine
function \cite{Kripfgang,GorokhovikZorko}. A convex set $Q \subset \mathbb{R}^d$ is referred to as \textit{polyhedral},
if it can be represented as the intersection of a finite family of closed halfspaces. A finite family 
$\sigma = \{ Q_1, \ldots, Q_k \}$, $k \in \mathbb{N}$, of polyhedral sets is said to be a \textit{polyhedral partition}
of $\mathbb{R}^d$, if $\mathbb{R}^d = \cup_{i = 1}^k Q_i$, $\interior Q_i \ne \emptyset$ for $1 \le i \le k$, and the
interiors of the sets $Q_i$ are mutually disjoint. Finally, a function $f \colon \mathbb{R}^d \to \mathbb{R}$ is called
\textit{piecewise affine}, if there exists a polyhedral partition $\sigma = \{ Q_1, \ldots, Q_k \}$ of $\mathbb{R}^d$
such that the restriction of $f$ to each $Q_i$ is an affine function.

Let $f \colon \mathbb{R}^d \to \mathbb{R}$ be a piecewise affine function. Then by
\cite[Theorem~3.1]{GorokhovikZorko}, there exist $(a_i, v_i) \in \mathbb{R}^{d + 1}$, $i \in I = \{ 1, \ldots, l \}$,
and $(b_j, w_j) \in \mathbb{R}^{d + 1}$, $j \in J = \{ 1, \ldots, s \}$, such that
\begin{equation} \label{DCdecomp_PiecewiseAffFunc}
  f(x) = \max_{i \in I} (a_i + \langle v_i, x \rangle) + \min_{j \in J} (b_j + \langle w_j, x \rangle)
  \quad \forall x \in \mathbb{R}^d.
\end{equation}
Define 
\begin{equation} \label{ConvexConcaveParts_PiecewiseAffine}
  \underline{f}(x) = \max_{i \in I} (a_i + \langle v_i, x \rangle), \quad 
  \overline{f}(x) = \min_{j \in J} (b_j + \langle w_j, x \rangle).
\end{equation}
Then $f = \underline{f} - (- \overline{f})$ is a DC decomposition of the function $f$ 
(i.e. $f = \underline{f} - (- \overline{f})$ is a representation of the function $f$ as the difference of
convex functions). Introduce the set-valued mappings
\begin{equation} \label{DefOfGlobalCodiff_PiecewiseAff}
\begin{split}
  \underline{d} f(x) &= \co\big\{ (a_i - \underline{f}(x) + \langle v_i, x \rangle, v_i) \in \mathbb{R}^{d + 1} 
  \bigm| i \in I \big\}, \\
  \overline{d} f(x) &= \co\big\{ (b_j - \overline{f}(x) + \langle w_j, x \rangle, w_j) \in \mathbb{R}^{d + 1}
  \bigm| j \in J \big\}.
\end{split}
\end{equation}
Then, as it is easy to see, for any $x, \Delta x \in \mathbb{R}^{d + 1}$ one has
\begin{align} 
  f(x &+ \Delta x) - f(x) = \underline{f}(x + \Delta x) + \overline{f}(x + \Delta x) 
  - (\underline{f}(x) + \overline{f}(x)) \notag \\
  &= \max_{i \in I} (a_i + \langle v_i, x + \Delta x \rangle) +
  \min_{j \in J} (b_j + \langle w_j, x + \Delta x \rangle) 
  - (\underline{f}(x) + \overline{f}(x)) \notag \\
  &= \max_{i \in I} (a_i - \underline{f}(x) + \langle v_i, x \rangle + \langle v_i, \Delta x \rangle) \notag \\
  &+ \min_{j \in J} (b_j - \overline{f}(x) + \langle w_j, x \rangle + \langle w_j, \Delta x \rangle) \notag \\
  &= \max_{(a, v) \in \underline{d} f(x)} (a + \langle v, \Delta x \rangle)
  + \min_{(b, w) \in \overline{d} f(x)} (b + \langle w, \Delta x \rangle). \label{PiecewiseAffFunc_CodiffDecomp}
\end{align}
Furthermore, for any $x \in \mathbb{R}^d$ one has
\begin{equation} \label{HypodiffNonnegative}
  \max_{(a, v) \in \underline{d} f(x)} a = \max_{i \in I} (a_i + \langle v_i, x \rangle) - \underline{f}(x) 
  = \underline{f}(x) - \underline{f}(x) = 0,
\end{equation}
and, similarly, $\min_{(b, w) \in \overline{d} f(x)} b = 0$. Thus, the pair 
$D f(x) = [\underline{d} f(x), \overline{d} f(x)]$ is a codifferential of $f$ at $x$. On the other hand, codifferential
is defined as a local approximation of a nonsmooth function, while equality \eqref{PiecewiseAffFunc_CodiffDecomp} holds
true for all $x, \Delta x \in \mathbb{R}^{d + 1}$, i.e. globally.

\begin{definition}
The pair $D f = [\underline{d} f, \overline{d} f]$ defined by \eqref{DefOfGlobalCodiff_PiecewiseAff} is called
\textit{a global codifferential mapping} (or simply \textit{global codifferential}) of the function $f$ (associated with
the DC decomposition $f = \underline{f} - (-\overline{f})$). The multifunction $\underline{d} f$ is called \textit{a
global hypodifferential} of $f$, while the multifunction $\overline{d} f$ is called \textit{a global hyperdifferential} 
of $f$. 
\end{definition}

Note that a global codifferential mapping of a piecewise affine function is not unique, since there exists infinitely
many DC decompositions of a piecewise affine function of the form \eqref{DCdecomp_PiecewiseAffFunc}. Let us also point
out that a global codifferential mapping of a piecewise affine function was first implicitly utilised by 
Polyakova in \cite{Polyakova}.

With the use of the codifferential calculus \cite{DemRub_book,Dolgopolik_Gen,Dolgopolik_MCD} one can obtain some simple
calculus rules for global codifferentials of piecewise affine functions.

\begin{proposition} \label{Prp_GlobalCodiffCalc}
Let $f_m \colon \mathbb{R}^d \to \mathbb{R}$, $m \in M = \{ 1, \ldots, p \}$, be piecewise affine functions of the form
$f_m = \underline{f}_m + \overline{f}_m$, where
$$
  \underline{f}_m(x) = \max_{i \in I_m} (a_{mi} + \langle v_{mi}, x \rangle), \quad
  \overline{f}_m(x) =  \min_{j \in J_m} (b_{mj} + \langle w_{mj}, x \rangle),
$$
and $I_m = \{ 1, \ldots, l_m \}$, $J_m = \{ 1, \ldots, s_m \}$. Let also $D f_m$ be the global codifferential
mapping of the function $f_m$ associated with the DC decomposition $f_m = \underline{f}_m - (\overline{f}_m)$, 
$m \in M$, and let $f \colon \mathbb{R}^d \to \mathbb{R}$ be a given function. Then the following statements hold true:
\begin{enumerate}
\item{if $f(x) = a + \langle v, x \rangle$, then both $D f(\cdot) \equiv [ \{ (0, v) \}, \{ (0, 0) \} ]$ and
$D f(\cdot) \equiv [ \{ (0, 0) \}, \{ (0, v) \} ]$ are global codifferential mappings of the function $f$;
\label{Calc_AffineFunc}}

\item{if $f = f_1 + c$ for some $c \in \mathbb{R}$, then $D f = D f_1$;
\label{Calc_PlusConstant}}

\item{if $f = \lambda f_1$, then $D f = [ \lambda \underline{d} f_1, \lambda \overline{d} f_1 ]$ 
in the case $\lambda \ge 0$, and $D f = [ \lambda \overline{d} f_1, \lambda \underline{d} f_1 ]$ 
in the case $\lambda < 0$;
\label{Calc_ScalarMultiplication}}

\item{if $f = \sum_{m = 1}^p f_m$, then $D f = [ \sum_{m = 1}^p \underline{d} f_m, \sum_{m = 1}^p \overline{d} f_m ]$;
\label{Calc_Sum}}

\item{if $f = \max_{m \in M} f_m$, then 
$$
  D f(\cdot) = \bigg[ \co\bigg\{ (f_m(\cdot) - f(\cdot), 0) + \underline{d} f_m(\cdot)
  - \sum_{k \ne m} \overline{d} f_k(\cdot) \biggm| m \in M \bigg\}, \sum_{m = 1}^p \overline{d} f_m(\cdot)\bigg]
$$
is a global codifferential mapping of $f$;
\label{Calc_Max}}

\item{if $f = \min_{m \in M} f_m$, then 
$$
  D f(\cdot) = \bigg[ \sum_{m = 1}^p \underline{d} f_m(\cdot), 
  \co\bigg\{ (f_m(\cdot) - f(\cdot), 0) + \overline{d} f_m(\cdot) 
  - \sum_{k \ne m} \underline{d} f_k(\cdot) \biggm| m \in M  \bigg\} \bigg]
$$
is a global codifferential mapping of $f$.
\label{Calc_Min}}
\end{enumerate}
\end{proposition}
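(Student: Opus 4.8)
The plan is to prove each of the six calculus rules by exhibiting the claimed DC decomposition and verifying that the associated global codifferential, computed via \eqref{DefOfGlobalCodiff_PiecewiseAff}, coincides with the stated formula. The unifying observation is that a global codifferential is \emph{defined} by the DC decomposition through \eqref{DefOfGlobalCodiff_PiecewiseAff}: the hypodifferential is the convex hull of the points $(a_i - \underline{f}(x) + \langle v_i, x\rangle, v_i)$ coming from the convex part, and the hyperdifferential is the analogous object for the concave part. Thus for each operation I first need to produce a representation of $f$ in the form \eqref{DCdecomp_PiecewiseAffFunc}, and then simply read off the codifferential. The key algebraic engine is the identity $\max_{i}(c_i + \max_j d_{ij}) = \max_{i,j}(c_i + d_{ij})$ together with the distributivity of $\max/\min$ over affine terms and the interplay $\min_j p_j = -\max_j(-p_j)$.

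For the elementary items I would proceed as follows. For (\ref{Calc_AffineFunc}), note that an affine function $a + \langle v, x\rangle$ admits the trivial DC decompositions $\underline{f} = a + \langle v, x\rangle$, $\overline{f} = 0$ (so $\overline{d}f \equiv \{(0,0)\}$) or symmetrically $\underline{f} = 0$, $\overline{f} = a + \langle v, x\rangle$; plugging each into \eqref{DefOfGlobalCodiff_PiecewiseAff} and using $\underline{f}(x) = a + \langle v, x\rangle$ yields the hypogradient $(a - \underline{f}(x) + \langle v, x\rangle, v) = (0, v)$, giving both stated mappings. Item (\ref{Calc_PlusConstant}) follows because adding $c$ shifts only the constant terms $a_i \mapsto a_i + c$, while $\underline{f}(x)$ shifts by the same $c$, so the difference $a_i - \underline{f}(x)$ is unchanged. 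For (\ref{Calc_ScalarMultiplication}) one uses $\lambda\max = \max(\lambda\cdot)$ for $\lambda \ge 0$ but $\lambda\max_i = \min_i(\lambda\cdot)$ for $\lambda < 0$, which is precisely why the hypo- and hyperdifferentials swap roles. Item (\ref{Calc_Sum}) is the most direct: summing the decompositions $f_m = \underline{f}_m + \overline{f}_m$ gives $f = \sum_m \underline{f}_m + \sum_m \overline{f}_m$, and by Proposition~\ref{Prp_AmenabilitySumMax}-style reasoning (the sum rule \eqref{HypodiffOfLinComb} applied to the convex parts, and its dual applied to the concave parts) the codifferential is the Minkowski sum componentwise.

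The substantive cases are (\ref{Calc_Max}) and (\ref{Calc_Min}), and these are where the main work lies. The plan for (\ref{Calc_Max}) is to write $f = \max_{m} f_m = \max_m(\underline{f}_m + \overline{f}_m)$ and rewrite each concave part as $\overline{f}_m = \sum_{k} \overline{f}_k - \sum_{k \ne m} \overline{f}_k$, so that $f = \big(\sum_k \overline{f}_k\big) + \max_m\big(\underline{f}_m - \sum_{k \ne m}\overline{f}_k\big)$. The first summand is concave (a sum of concave functions) and supplies the hyperdifferential $\sum_m \overline{d}f_m$; the inner $\max$ over $m$ of convex functions (each $\underline{f}_m - \sum_{k\ne m}\overline{f}_k$ is convex, since $-\overline{f}_k$ is convex) is convex and contributes the hypodifferential. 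Applying the max-rule \eqref{HypodiffOfMax} to this family, with each convex piece having hypodifferential $\underline{d}f_m - \sum_{k\ne m}\overline{d}f_k$ (by the sum rule, noting $-\overline{f}_k$ has hypodifferential $-\overline{d}f_k$), and shifting each by $(f_m - f, 0)$, reproduces the stated formula. Case (\ref{Calc_Min}) is entirely dual, obtained by writing $f = -\max_m(-f_m)$ and applying (\ref{Calc_Max}) and (\ref{Calc_ScalarMultiplication}), or directly by the symmetric argument with the roles of $\underline{f}$ and $\overline{f}$ interchanged.

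\textbf{The main obstacle} I anticipate is bookkeeping in (\ref{Calc_Max}): one must carefully justify that the constant term $f_m(x) - f(x)$ that appears in \eqref{HypodiffOfMax} remains correct after the regrouping, and verify that the codifferential produced genuinely satisfies the normalization \eqref{HypodiffNonnegative}, i.e.\ $\max_{(a,v)\in\underline{d}f(x)} a = 0$ and $\min_{(b,w)\in\overline{d}f(x)} b = 0$, so that the pair is a legitimate global codifferential rather than merely a valid DC approximation. The cleanest route is to avoid recomputing \eqref{DefOfGlobalCodiff_PiecewiseAff} from scratch and instead invoke the codifferential calculus rules cited from \cite{DemRub_book,Dolgopolik_Gen,Dolgopolik_MCD}, checking only that the global (rather than local) character is preserved because the underlying identity \eqref{PiecewiseAffFunc_CodiffDecomp} holds for all $\Delta x$, not merely asymptotically.
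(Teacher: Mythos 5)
Your proposal is correct and matches the paper's own proof in essence: for each rule you exhibit exactly the same DC decompositions (in particular, for the maximum, $\overline{f} = \sum_{m} \overline{f}_m$ and $\underline{f} = \max_{m}\bigl(\underline{f}_m - \sum_{k \ne m}\overline{f}_k\bigr)$, with the constant-term identity $\underline{f}_m - \sum_{k \ne m}\overline{f}_k - \underline{f} = f_m - f$ that you rightly flag as the main bookkeeping point) and read off the global codifferential from \eqref{DefOfGlobalCodiff_PiecewiseAff}, whereas the paper merely carries out explicitly, via a product-index expansion and a double inclusion of convex hulls, the Minkowski-sum and max-rule computations that you invoke in abstract form. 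One caution: the shortcut you suggest at the end --- invoking the cited local codifferential calculus and only checking that globality is preserved --- would not suffice on its own, because a global codifferential is by definition the pair generated by a fixed DC decomposition through \eqref{DefOfGlobalCodiff_PiecewiseAff}, but your primary plan already supplies those decompositions, so the proof goes through as the paper's does.
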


\begin{proof}
\noindent\ref{Calc_AffineFunc}. Define
$$
  \underline{f}'(x) = a + \langle v, x \rangle, \; \overline{f}'(x) = 0, \quad
  \underline{f}''(x) = 0, \: \overline{f}''(x) = a + \langle v, x \rangle.
$$
Then $f = \underline{f}' - (- \overline{f}')$ and $f = \underline{f}'' - ( - \overline{f}'')$ are two DC decompositions
of the function $f$. Applying the definition of global codifferential \eqref{DefOfGlobalCodiff_PiecewiseAff} one gets
that $D f(\cdot) = [ \{ (0, v) \}, \{ (0, 0) \} ]$ is a global codifferential of $f$ associated with the first DC
decomposition, while $D f(\cdot) = [ \{ (0, 0) \}, \{ (0, v) \} ]$ is a global codifferential of $f$ associated with
the second DC decomposition.

\noindent\ref{Calc_PlusConstant}. Define
$$
  \underline{f}(x) = \underline{f}_1(x) + c = \max_{i \in I_1} ( a_{1i} + c + \langle v_{1i}, x \rangle),
  \quad \overline{f}(x) = \overline{f}_1(x).
$$
Then $f = \underline{f} - (- \overline{f}(x))$ is a DC decomposition of the function $f$. Applying the definition
of global codifferential \eqref{DefOfGlobalCodiff_PiecewiseAff}, and the fact that
$$
  (a_{1i} + c - \underline{f}(x) + \langle v_{1i}, x \rangle, v_{1i})
  = (a_{1i} - \underline{f}_1(x) + \langle v_{1i}, x \rangle, v_{1i}),
$$
one gets that $\underline{d} f(x) = \underline{d} f_1(x)$ and $\overline{d} f(x) = \overline{d} f_1(x)$, i.e. 
$D f = D f_1$ is a global codifferential of $f$ associated with the DC decomposition 
$f = \underline{f} - (- \overline{f})$ defined above.

\noindent\ref{Calc_ScalarMultiplication}. Let $\lambda \ge 0$. Define $\underline{f}(x) = \lambda \underline{f}_1(x)$
and $\overline{f}(x) = \lambda \overline{f}_1(x)$. Then $f = \lambda f_1 = \underline{f} - (- \overline{f})$ is a DC
decomposition of the function $f$. By definition
$$
  \underline{f}(x) = \max_{i \in I_1} (\lambda a_{1i} + \langle \lambda v_{1i}, x \rangle), \quad
  \overline{f}(x) = \min_{j \in J_1} (\lambda b_{1i} + \langle \lambda w_{1i}, x \rangle).
$$
Hence with the use of \eqref{DefOfGlobalCodiff_PiecewiseAff} and the fact that
\begin{equation} \label{HypogradScalarMultiplication}
  (\lambda a_{1i} - \underline{f}(x) + \langle \lambda v_{1i}, x \rangle, \lambda v_{1i})
  = \lambda (a_{1i} - \underline{f}_1(x) + \langle v_{1i}, x \rangle, v_{1i})
\end{equation}
one gets that $\underline{d} f = \lambda \underline{d} f_1$ and $\overline{d} f = \lambda \overline{d} f_1$, i.e.
$D f = [ \lambda \underline{d} f_1, \lambda \overline{d} f_1 ]$ is a global codifferential of $f$ associated with the DC
decomposition $f = \underline{f} - (- \overline{f})$ defined above.

Let now $\lambda < 0$. Define $\underline{f}(x) = \lambda \overline{f}_1(x)$ and 
$\overline{f}(x) = \lambda \underline{f}_1(x)$. Then taking into account the fact the negative of a convex function is
a concave function and vice versa one obtains that $f = \lambda f_1 = \underline{f} - (- \overline{f})$ is a DC
decomposition of the function $f$. By definition one has
$$
  \underline{f}(x) = \max_{j \in J_1} (\lambda b_{1j} + \langle \lambda w_{1j}, x \rangle), \quad
  \overline{f}(x) = \min_{i \in I_1} (\lambda a_{1i} + \langle \lambda v_{1i}, x \rangle)
$$
(recall that $\lambda < 0$). Hence applying \eqref{DefOfGlobalCodiff_PiecewiseAff} and the fact that
$$
  (\lambda b_{1j} - \underline{f}(x) + \langle \lambda w_{1j}, x \rangle, \lambda w_{1j})
  = \lambda (b_{1j} - \overline{f}_1(x) + \langle w_{1j}, x \rangle, w_{1j})
$$
one obtains that $\underline{d} f = \lambda \overline{d} f_1$ and $\overline{d} f = \lambda \underline{d} f_1$,
i.e. $D f = [ \lambda \overline{d} f_1, \lambda \underline{d} f_1 ]$ is a global codifferential of $f$ associated with
the DC decomposition $f = \underline{f} - (- \overline{f})$ defined above.

\noindent\ref{Calc_Sum}. Define
\begin{equation} \label{DCdecomp_Sum}
  \underline{f}(x) = \sum_{m = 1}^p \underline{f}_m(x), \quad
  \overline{f}(x) = \sum_{m = 1}^p \overline{f}_m(x).
\end{equation}
Then $f = \underline{f} - (- \overline{f})$ is a DC decomposition of the function $f$ due to the fact that the sum of
convex/concave functions is a convex/concave function. Note that
$$
  \underline{f}(x) = \sum_{m = 1}^p \max_{i \in I_m} (a_{mi} + \langle v_{mi}, x \rangle )
  = \max_{(i_1,\ldots, i_p) \in I_1 \times \ldots \times I_p}
  \Big( \sum_{m = 1}^p a_{m i_m} + \Big\langle \sum_{m = 1}^p v_{m i_m}, x \Big\rangle \Big),
$$
and a similar equality holds true for $\overline{f}(x)$. Hence with the use of
\eqref{DefOfGlobalCodiff_PiecewiseAff}, and the fact that
\begin{multline*}
  \Big( \sum_{m = 1}^p a_{m i_m} - \underline{f}(x) + \Big\langle \sum_{m = 1}^p v_{m i_m}, x \Big\rangle, 
  \sum_{m = 1}^p v_{m i_m} \Big) \\
  = \sum_{m = 1}^p \big( a_{m i_m} - \underline{f}_m(x) + \langle v_{m i_m}, x \rangle, v_{m i_m} \big)
\end{multline*}
one obtains that $\underline{d} f(x) = \sum_{m = 1}^p \underline{d} f_m(x)$ and
$\overline{d} f(x) = \sum_{m = 1}^p \overline{d} f_m(x)$, i.e. 
$D f = [ \sum_{m = 1}^p \underline{d} f_m, \sum_{m = 1}^p \overline{d} f_m ]$ is a global codifferential of $f$
associated with the DC decomposition $f = \underline{f} - (- \overline{f})$ defined by \eqref{DCdecomp_Sum}.

\noindent\ref{Calc_Max}. Define
\begin{equation} \label{DCdecomp_Max}
  \underline{f}(x) = \max_{m \in M} \Big( \underline{f}_m(x) - \sum_{k \ne m} \overline{f}_m(x) \Big),	\quad
  \overline{f}(x) = \sum_{m = 1}^p \overline{f}_m(x).
\end{equation}
Note that the function $\underline{f}$ is convex, since the maximum and the sum of convex functions is convex, while the
function $\overline{f}$ is concave as the sum of concave functions. By definition 
$$
  f(x) = \max_{m \in M} f_m(x) = \max_{m \in M} \Big( \underline{f}_m(x) + \overline{f}_m(x) \Big).
$$
Adding and subtracting $\sum_{m = 1}^p \overline{f}_m$ one obtains that
$$
  f(x) = \max_{m \in M} \Big( \underline{f}_m(x) - \sum_{k \ne m} \overline{f}_m(x) \Big) 
  + \sum_{m = 1}^p \overline{f}_m(x) = \underline{f}(x) + \overline{f}(x),
$$
i.e. $f = \underline{f} - (- \overline{f})$ is a DC decomposition of the function $f$. Let us compute the global
codifferential of $f$ associated with this DC decomposition. 

From the proof of part \eqref{Calc_Sum} it follows that $\overline{d} f(x) = \sum_{m = 1}^p \overline{d} f_m(x)$ 
(cf. \eqref{DCdecomp_Sum} and \eqref{DCdecomp_Max}). Let us compute the global hypodifferential. By definition one has
\begin{align*}
  \underline{f}(x) &= \max_{m \in M} \Big( \underline{f}_m(x) - \sum_{k \ne m} \overline{f}_m(x) \Big) \\
  &= \max_{m \in M} \Big( \max_{i_m \in I_m} (a_{m i_m} + \langle v_{m i_m}, x \rangle)
  - \sum_{k \ne m} \min_{j_k^m \in J_k} (b_{k j_k^m} + \langle w_{k j_k^m}, x \rangle \Big) \\
  &= \max \Big( a_{m i_m} - \sum_{k \ne m} b_{k j_k^m} + 
  \Big\langle v_{m i_m} - \sum_{k \ne m} w_{k j_k^m}, x \Big\rangle \Big),
\end{align*}
where the last maximum is taken over all $i_m \in I_m$, $j_k^m \in J_k$, and $k, m \in M$. Hence and from
\eqref{DefOfGlobalCodiff_PiecewiseAff} it follows that the first coordinate of a vector from $\underline{d} f(x)$ has
the form
$$
  a_{m i_m} - \sum_{k \ne m} b_{k j_k^m} - \underline{f}(x)
  + \Big\langle v_{m i_m} - \sum_{k \ne m} w_{k j_k^m}, x \Big\rangle.
$$
Adding and subtracting $\underline{f}_m(x) - \sum_{k \ne m} \overline{f}_k(x)$, and taking into account the fact that
\begin{align*}
  \underline{f}_m(x) - \sum_{k \ne m} \overline{f}_k(x) - \underline{f}(x) 
  &= \underline{f}_m(x) + \overline{f}_m(x) - \sum_{k = 1}^p \overline{f}_k(x) - \underline{f}(x) \\
  &= f_m(x) - \overline{f}(x) - \underline{f}(x) = f_m(x) - f(x)
\end{align*}
one obtains that
\begin{multline*}
  a_{m i_m} - \sum_{k \ne m} b_{k j_k^m} - \underline{f}(x)
  + \Big\langle v_{m i_m} - \sum_{k \ne m} w_{k j_k^m}, x \Big\rangle \\
  = \big( f_m(x) - f(x) \big) + \Big( a_{m i_m} - \underline{f}_m(x) + \langle v_{m i_m}, x \rangle \Big)
  - \sum_{k \ne m} \Big( b_{k j_k^m} - \overline{f}_k(x) + \langle w_{k j_k^m}, x \rangle \Big).
\end{multline*}
Hence with the use of \eqref{DefOfGlobalCodiff_PiecewiseAff} one gets that
\begin{multline} 
  \underline{d} f(x) = \co\Big\{ (f_m(x) - f(x), 0) 
  + \big( a_{m i_m} - \underline{f}_m(x) + \langle v_{m i_m}, x \rangle, v_{m i_m} \big) \\
  - \sum_{k \ne m} \big( b_{k j_k^m} - \overline{f}_k(x) + \langle w_{k j_k^m}, x \rangle, w_{k j_k^m} \big)
  \Bigm| i_m \in I_m, \: j_k^m \in J_k, \: k \in M, \: m \in M \Big\}. \label{HypodiffMax_Direct}
\end{multline}
From the fact that by definition
\begin{align*}
  \big( a_{m i_m} - \underline{f}_m(x) + \langle v_{m i_m}, x \rangle, v_{m i_m} \big) &\in \underline{d} f_m(x), \\
  \big( b_{k j_k^m} - \overline{f}_k(x) + \langle w_{k j_k^m}, x \rangle, w_{k j_k^m} \big) 
  &\in \overline{d} f_k(x)
\end{align*}
it follows that
\begin{equation} \label{HypodiffMax_Direct_Incl}
  \underline{d} f(x) \subseteq
  \co\Big\{ (f_m(x) - f(x), 0) + \underline{d} f_m(x) - \sum_{k \ne m} \overline{d} f_k(x) \Bigm| m \in M \Big\}.
\end{equation}
To prove the converse inclusion fix $m \in M$, and note that taking the convex hull in \eqref{HypodiffMax_Direct} only
over $i_m \in I_m$ one obtains that
$$
  (f_m(x) - f(x), 0) + \underline{d} f_m(x)
  - \sum_{k \ne m} \big( b_{k j_k^m} - \overline{f}_k(x) + \langle w_{k j_k^m}, x \rangle, w_{k j_k^m} \big)
  \subseteq \underline{d} f(x).
$$
Now, taking consecutively the convex hull over all $j_k^m \in J_k$ for each $k \ne m$ one gets that
$$
  (f_m(x) - f(x), 0) + \underline{d} f_m(x) - \sum_{k \ne m} \overline{d} f_k(x) \subseteq \underline{d} f(x).
$$
Finally, taking the convex hull over all $m \in M$ one obtains that the inclusion opposite to
\eqref{HypodiffMax_Direct_Incl} is valid, which implies the desired result.

\noindent\ref{Calc_Min}. Define
$$
  \underline{f}(x) = \sum_{m = 1}^p \underline{f}_m(x),	\quad
  \overline{f}(x) = \min_{m \in M} \Big( \overline{f}_m(x) - \sum_{k \ne m} \underline{f}_m(x) \Big).
$$
Clearly, the function $\underline{f}$ is convex, while the function $\overline{f}$ is concave. By definition 
$$
  f(x) = \min_{m \in M} f_m(x) = \min_{m \in M} \Big( \underline{f}_m(x) + \overline{f}_m(x) \Big).
$$
Adding and subtracting $\sum_{m = 1}^p \underline{f}_m$ one obtains that
$$
  f(x) = \sum_{m = 1}^p \overline{f}_m 
  + \min_{m \in M} \Big( \overline{f}_m(x) - \sum_{k \ne m} \underline{f}_m(x) \Big) 
  = \underline{f}(x) + \overline{f}(x),
$$
i.e. $f = \underline{f} - (- \overline{f})$ is a DC decomposition of the function $f$. Computing the global
codifferential of the function $f$ associated with this DC decomposition in the same way as in part (\ref{Calc_Max}) one
obtains the required result (alternatively, one can rewrite $f = - \max_{m \in M} (- f_m)$, and consecutively
apply part (\ref{Calc_ScalarMultiplication}) with $\lambda = -1$, part (\ref{Calc_Max}), and part
(\ref{Calc_ScalarMultiplication}) with $\lambda = -1$ again to obtain exactly the same result).
\end{proof}

\begin{remark} \label{Remark_DCdecomp_Equiv_Codiff}
{(i)~Note that with the use of the proposition above one can compute DC decomposition \eqref{DCdecomp_PiecewiseAffFunc}
of a piecewise affine function (see~\cite{Angelov} for more details). Namely, suppose that a global
codifferential $D f(0)$ of $f$ at zero is known, $\underline{d} f(0) = \co\{ (a_i, v_i) \mid 1 \le i \le l \}$, and 
$\overline{d} f(0) = \co\{ (b_j, w_j) \mid 1 \le j \le s \}$. Applying \eqref{PiecewiseAffFunc_CodiffDecomp} with 
$x = 0$ one obtains that
\begin{align*}
  f(\Delta x) - f(0) &= \max_{(a, v) \in \underline{d} f(0)} (a + \langle v, \Delta x \rangle)
  + \min_{(b, w) \in \overline{d} f(0)} (b + \langle w, \Delta x \rangle) \\
  &= \max_{1 \le i \le l} (a_i + \langle v_i, \Delta x \rangle)
  + \min_{1 \le j \le s} (b_j + \langle w_j, \Delta x \rangle).
\end{align*}
Define
\begin{equation} \label{DC_decomp_Via_GlobalCodiff}
  \underline{f}(x) = \max_{1 \le i \le l} (a_i + f(0) + \langle v_i, x \rangle), \quad
  \overline{f}(x) = \min_{1 \le j \le s} (b_j + \langle w_j, x \rangle).
\end{equation}
Then $f = \underline{f} - (- \overline{f})$ is a DC decomposition of the function $f$, i.e. there is a
one-to-one correspondence between DC decompositions and global codifferentials of piecewise affine functions. Let us
also note that from the definition of global codifferential \eqref{DefOfGlobalCodiff_PiecewiseAff}, and the equality
$$
  (a_i - \underline{f}(y) + \langle v_i, y \rangle, v_i) = 
  (a_i - \underline{f}(x) + \langle v_i, x \rangle, v_i) 
  + ( \underline{f}(x) - \underline{f}(y) + \langle v_i, y - x \rangle, 0 )
$$
it follows that
\begin{equation} \label{RecomputingGlobalCodiff}
  \underline{d} f(y) 
  = \big\{ (a + \underline{f}(x) - \underline{f}(y) + \langle v, y - x \rangle, v) \in \mathbb{R}^{d + 1}
  \bigm| (a, v) \in \underline{d} f(x) \big\}
\end{equation}
for all $x, y \in \mathbb{R}^d$, and a similar equality holds true for $\overline{d} f(\cdot)$. Thus, one can easily
compute $D f(y)$ for any $y$, if $D f(x)$ for some $x$ is known.
}

\noindent{(ii)~It should be mentioned that the proposition above allows one to compute a global codifferential of a
piecewise affine function $f$ without computing its DC decomposition. Nevertheless, in order to avoid rather lengthy
computations at every point $x$ it seems most reasonable to compute $D f(0)$ first, then compute a DC decomposition of
$f$ with the use of \eqref{DC_decomp_Via_GlobalCodiff}, and, finally, utilise \eqref{RecomputingGlobalCodiff} to
compute $D f(x)$ at any point $x \in \mathbb{R}^d$.
}

\noindent{(iii)~Let us note that the proper choice of a global codifferential of the affine function 
$a + \langle v, x \rangle$ allows one to simplify the computation of a global codifferential of a piecewise affine
function. The first global codifferential from part (\ref{Calc_AffineFunc}) of Prop.~\ref{Prp_GlobalCodiffCalc} is
more suitable for the computation of a global codifferential of the maximum of affine functions, while the second one is
more suitable in the case of the minimum. Indeed, if $f = \max_{1 \le i \le l} f_i$, where
$f_i(x) = a_i + \langle v_i, x \rangle$, then applying part \eqref{Calc_Max} of Prop.~\ref{Prp_GlobalCodiffCalc} one
obtains
$$
  \underline{d} f(x) = \co\big\{ (a_i + \langle v_i, x \rangle - f(x), v_i) \bigm| 1 \le i \le l \big\}, \quad
  \overline{d} f(x) = \{ (0, 0) \}
$$
for $D f_i(\cdot) = [ \{ (0, v_i) \}, \{ (0, 0) \} ]$, while
$$
  \underline{d} f(x) = \co\Big\{ \Big(a_i + \langle v_i, x \rangle - f(x), - \sum_{k \ne i} v_k  \Big) 
  \Bigm| 1 \le i \le l\Big\},  \quad \overline{d} f(x) = \Big\{ \Big(0, \sum_{i = 1}^l v_i \Big) \Big\}
$$
for $D f_i(\cdot) = [ \{ (0, 0) \}, \{ (0, v_i) \} ]$.
}
\end{remark}

Let us derive new global optimality conditions for a piecewise affine function in terms of its global codifferential.

\begin{theorem} \label{Thrm_GlobOptCond}
Let $f \colon \mathbb{R}^d \to \mathbb{R}$ be a piecewise affine function of the form \eqref{DCdecomp_PiecewiseAffFunc},
$D f$ be its global codifferential mapping, and $x^* \in \mathbb{R}^d$ be a given point. Suppose also that $f$ is
bounded below, and for any $j \in J$ define 
$z_j = (b_j - \overline{f}(x^*) + \langle w_j, x^* \rangle, w_j) \in \overline{d} f(x^*)$, and 
\begin{equation} \label{GlobOptCond_MinCodiffGrad}
  \big\{ (a_j, v_j) \big\} = 
  \argmin\big\{ \| (a, v) \|^2 \bigm| (a, v) \in \underline{d} f(x^*) + z_j \big\}.
\end{equation}
Then $x^*$ is a point of global minimum of the function $f$ if and only if for any $j \in J$, one has 
$a_j \ge 0$ or, equivalently, for any $j \in J$ either $0 \in \underline{d} f(x^*) + z_j$ or $a_j > 0$.
\end{theorem}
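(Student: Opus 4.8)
The plan is to reduce the global nonnegativity of the increment $f(x^* + \Delta x) - f(x^*)$ to a finite family of polyhedral nonnegativity problems, one for each $j \in J$, and then to apply Lemma~\ref{Lemma_PolyhedralFunc_Nonnegative} and Corollary~\ref{Corollary_PolyhedralFunc_Nonnegative}. First I would observe that $x^*$ is a global minimizer of $f$ if and only if $f(x^* + \Delta x) - f(x^*) \ge 0$ for all $\Delta x \in \mathbb{R}^d$. By the global codifferential decomposition~\eqref{PiecewiseAffFunc_CodiffDecomp} this increment equals $\max_{(a,v) \in \underline{d} f(x^*)} (a + \langle v, \Delta x \rangle) + \min_{(b,w) \in \overline{d} f(x^*)} (b + \langle w, \Delta x \rangle)$. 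Since $(a,v) \mapsto a + \langle v, \Delta x\rangle$ is linear, the maximum over the convex hull $\underline{d} f(x^*)$ is attained at one of its generators, and likewise the minimum over $\overline{d} f(x^*)$ is attained at one of the generators $z_j$; hence the increment equals $G(\Delta x) + \min_{j \in J} (\beta_j + \langle w_j, \Delta x \rangle)$, where $G(\Delta x) = \max_{(a,v) \in \underline{d} f(x^*)} (a + \langle v, \Delta x \rangle)$ and $\beta_j$ denotes the first coordinate of $z_j$.

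The decisive step is to decouple the $\min_{j}$: I claim that $G(\Delta x) + \min_{j \in J}(\beta_j + \langle w_j, \Delta x\rangle) \ge 0$ for all $\Delta x$ if and only if, for every fixed $j \in J$, the polyhedral convex function $g_j(\Delta x) = G(\Delta x) + \beta_j + \langle w_j, \Delta x\rangle$ is nonnegative on $\mathbb{R}^d$. The forward implication is immediate from $\min_{k}(\beta_k + \langle w_k, \Delta x\rangle) \le \beta_j + \langle w_j, \Delta x\rangle$, which gives $g_j(\Delta x) \ge G(\Delta x) + \min_k(\beta_k + \langle w_k, \Delta x\rangle) \ge 0$. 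For the converse, given $\Delta x$ one chooses an index $j^*$ attaining the minimum and notes that the increment equals $g_{j^*}(\Delta x) \ge 0$. This equivalence is the only genuinely nonroutine ingredient, and I expect it to be the main obstacle: it is what lets the single nonconvex global condition split into finitely many \emph{convex} ones, each amenable to the earlier lemma.

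It then remains to analyse each $g_j$ separately. Writing $G(\Delta x) = \max_{i \in I}(\alpha_i + \langle v_i, \Delta x\rangle)$ with $\alpha_i$ the first coordinate of the generator of $\underline{d} f(x^*)$ indexed by $i$, one gets $g_j(\Delta x) = \max_{i \in I}\big(\alpha_i + \beta_j + \langle v_i + w_j, \Delta x\rangle\big)$, which is exactly the polyhedral function of Lemma~\ref{Lemma_PolyhedralFunc_Nonnegative} associated with the coefficient set $\co\{(\alpha_i + \beta_j, v_i + w_j) \mid i \in I\} = \underline{d} f(x^*) + z_j$; in particular its minimal-norm element is the pair $(a_j, v_j)$ from~\eqref{GlobOptCond_MinCodiffGrad}. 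Moreover $g_j$ is bounded below, since $g_j(\Delta x) \ge G(\Delta x) + \min_k(\beta_k + \langle w_k, \Delta x\rangle) = f(x^* + \Delta x) - f(x^*) \ge \inf f - f(x^*) > -\infty$, the last inequality using the standing hypothesis that $f$ is bounded below. Hence Corollary~\ref{Corollary_PolyhedralFunc_Nonnegative} applies and yields $g_j \ge 0$ on $\mathbb{R}^d$ if and only if $a_j \ge 0$, while Lemma~\ref{Lemma_PolyhedralFunc_Nonnegative} gives the equivalent form $0 \in \underline{d} f(x^*) + z_j$ or $a_j > 0$. Combining this with the decoupling of the previous paragraph, $x^*$ is a global minimizer of $f$ if and only if $a_j \ge 0$ for every $j \in J$, as asserted.
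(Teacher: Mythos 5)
Your proposal is correct and takes essentially the same route as the paper's proof: both rewrite the increment via \eqref{PiecewiseAffFunc_CodiffDecomp} as $f(x) - f(x^*) = \min_{j \in J} g_j(x)$ with $g_j(x) = \max_{(a, v) \in \underline{d} f(x^*) + z_j} (a + \langle v, x - x^* \rangle)$, note that each $g_j$ is bounded below because $g_j(x) \ge f(x) - f(x^*)$ and $f$ is bounded below, and then invoke Lemma~\ref{Lemma_PolyhedralFunc_Nonnegative} and Corollary~\ref{Corollary_PolyhedralFunc_Nonnegative}. The ``decoupling'' step you flag as the main obstacle is exactly the paper's one-line observation (a finite pointwise minimum is nonnegative everywhere if and only if every function in the family is), so nothing genuinely different is involved.
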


\begin{proof}
Applying equality \eqref{PiecewiseAffFunc_CodiffDecomp} with $x = x^*$ and $\Delta x = x - x^*$, and the definition of
global codifferential \eqref{DefOfGlobalCodiff_PiecewiseAff} one obtains that
\begin{multline*}
  f(x) - f(x^*) = \max_{(a, v) \in \underline{d} f(x^*)} (a + \langle v, x - x^* \rangle) \\
  + \min_{j \in J} (b_j - \overline{f}(x^*) + \langle w_j, x^* \rangle + \langle w_j, x - x^* \rangle) \\
  = \min_{j \in J} \max_{(a, v) \in \underline{d} f(x^*)} 
  \Big( a + b_j - \overline{f}(x^*) + \langle w_j, x^* \rangle + \langle v + w_j, x - x^* \rangle \Big)
\end{multline*}
for all $x \in \mathbb{R}^d$. Hence for any $x \in \mathbb{R}^d$ one has
\begin{equation} \label{CoexhausterRepresentation}
  f(x) - f(x^*) = \min_{j \in J} \max_{(a, v) \in \underline{d} f(x^*) + z_j} (a + \langle v, x - x^* \rangle)
  = \min_{j \in J} g_j(x),
\end{equation}
where $g_j(x) = \max_{(a, v) \in \underline{d} f(x^*) + z_j} (a + \langle v, x \rangle)$. Therefore, $x^*$ is point of
global minimum of $f$ if and only if for any $j \in J$ the function $g_j$ is nonnegative. Note that each function $g_j$
is bounded below due to the facts that $g_j(x) \ge f(x) - f(x^*)$ for all $x \in \mathbb{R}^d$ (see
\eqref{CoexhausterRepresentation}), and $f$ is bounded below. Consequently, applying
Lemma~\ref{Lemma_PolyhedralFunc_Nonnegative} and Corollary~\ref{Corollary_PolyhedralFunc_Nonnegative} to the functions
$g_j$ one obtains the desired result.
\end{proof}

The necessary and sufficient conditions for global optimality in terms of global codifferential from
Theorem~\ref{Thrm_GlobOptCond} along with the proof of Lemma~\ref{Lemma_PolyhedralFunc_Nonnegative} allow one to get a
new perspective on the method of codifferential descent. As it was noted above, a function is codifferentiable if and
only if its increment can be locally approximated by a DC function. In most applications a codifferential of a nonsmooth
function is a pair of convex polytopes, i.e. the increment of this function can be locally approximated by a piecewise
affine function. In a sense, in each iteration of the method of codifferential descent one verifies whether the global
optimality conditions from Theorem~\ref{Thrm_GlobOptCond} are satisfied for a local piecewise affine approximation of
the objective function, and then utilises the ``global descent'' directions $- v_j$ of the approximation (see
\eqref{GlobOptCond_MinCodiffGrad} and the proof of the first part of Lemma~\ref{Lemma_PolyhedralFunc_Nonnegative}) as
search directions for the objective function. In the case when the objective function itself is piecewise affine, and
its global codifferential mapping is known, one can propose a natural modification of the MCD in which instead of
performing the line search one utilises the first component of the vector $(a_j, v_j)$ in order to define the step size.

\subsection{The method of global codifferential descent}

Let $f \colon \mathbb{R}^d \to \mathbb{R}$ be a piecewise affine function of the form \eqref{DCdecomp_PiecewiseAffFunc},
and $D f$ be its global codifferential mapping (see \eqref{DefOfGlobalCodiff_PiecewiseAff}). For any 
$x \in \mathbb{R}^d$ and $j \in J$ denote
\begin{gather} \label{HyperGrad}
  z_j(x) = (b_j - \overline{f}(x) + \langle w_j, x \rangle, w_j) \in \overline{d} f(x), \\
  \big\{ (a_j(x), v_j(x)) \big\} = 
  \argmin\big\{ \| (a, v) \|^2 \bigm| (a, v) \in \underline{d} f(x) + z_j(x) \big\}. \label{ClosestPoint_MCD}
\end{gather}
Suppose that $x$ is not a point of global minimum of the function $f$, and choose an arbitrary 
$j \in J$. Applying the necessary and sufficient condition for a minimum of a convex function on a convex set
\cite[Proposition~II.2.1]{EkelandTemam} one obtains that
$$
  a_j(x) (a - a_j(x)) + \langle v_j(x), v - v_j(x) \rangle \ge 0 
  \quad \forall (a, v) \in \underline{d} f(x) + z_j(x).
$$
If $a_j(x) < 0$, then dividing this inequality by $a_j(x)$, taking the maximum over all 
$(a, v) \in \underline{d} f(x) + z_j(x)$, and applying \eqref{CoexhausterRepresentation} one obtains
\begin{equation} \label{MGCD_GlobalDescentDirections}
  f\left( x + \frac{1}{a_j(x)} v_j(x) \right) - f(x) 
  \le - \frac{1}{|a_j(x)|} \| (a_j(x), v_j(x)) \|^2 < 0.
\end{equation}
If $a_j(x) = 0$, but $v_j(x) \ne 0$, then $\langle v, - v_j(x) \rangle \le - \| v_j(x) \|^2$ for any 
$(a, v) \in \underline{d} f(x) + z_j(x)$, which with the use of \eqref{CoexhausterRepresentation} implies that
\begin{equation} \label{MGCD_UnboundedBelow}
  f(x - \alpha v_j(x)) - f(x) \le \max_{(a, v) \in \underline{d} f(x) + z_j(x)} a - \alpha \| v_j(x) \|^2,
\end{equation}
and the function $f$ is unbounded below. Thus, if $a_j(x) = 0$, and $f$ is bounded below, then $v_j(x) = 0$.

Finally, if $a_{j}(x) > 0$, then the set $\underline{d} f(x) + z_j(x)$ is of no use to the optimization process.
Indeed, by Lemma~\ref{Lemma_PolyhedralFunc_Nonnegative} one has
\begin{equation} \label{NonnegativeOfConvexPiece}
  \max_{(a, v) \in \underline{d} f(x) + z_j(x)} (a + \langle v, y \rangle) \ge 0 
  \quad \forall y \in \mathbb{R}^d.
\end{equation}
Applying \eqref{CoexhausterRepresentation} one gets that
$$
  f(y) - f(x) = \min_{k \in J} \max_{(a, v) \in \underline{d} f(x) + z_k(x)} (a + \langle v, y - x \rangle)
  \quad \forall y \in \mathbb{R}^d.
$$ 
From \eqref{NonnegativeOfConvexPiece} it follows that for any $y$ such that $f(y) < f(x)$ the minimum in this
equality cannot be achieved for $k = j$. Therefore
$$
  f(y) - f(x) = \min_{k \in J \setminus \{ j \}} 
  \max_{(a, v) \in \underline{d} f(x) + z_k(x)} (a + \langle v, y - x \rangle)
$$
for any $y \in \mathbb{R}$ such that $f(y) < f(x)$.
In other words, the index $j$ and the corresponding vector $(b_j, w_j)$ are not needed to compute $f(y)$ for any 
$y \in \mathbb{R}^d$ satisfying the inequality $f(y) < f(x)$.

Let us prove an even stronger statement. Namely, let us show that if $a_j(x) \ge 0$ for some $x \in \mathbb{R}^d$, then 
the index $j$ can be discarded from consideration.

\begin{lemma} \label{Lemma_MGCD_UselessVertices}
Let $f \colon \mathbb{R}^d \to \mathbb{R}$ be a piecewise affine function of the form \eqref{DCdecomp_PiecewiseAffFunc},
and $D f$ be its global codifferential mapping. Suppose that $f$ is bounded below, and for some $j \in J$ and 
$x \in \mathbb{R}^d$ one has $a_j(x) \ge 0$. Then $a_j(y) \ge 0$ for any $y \in \mathbb{R}^d$ such that $f(y) \le f(x)$.
\end{lemma}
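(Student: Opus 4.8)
The plan is to show that the sign condition $a_j(x) \ge 0$ is in fact a condition on the value $f(x)$ alone: it is equivalent to $f(x) \le \inf_z h_j(z)$ for a fixed convex function $h_j$ that does not depend on $x$. Once this is established, the lemma is immediate, since replacing $x$ by any $y$ with $f(y) \le f(x)$ only makes this threshold inequality easier to satisfy.

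First I would compute the vertices of the polytope $\underline{d} f(x) + z_j(x)$ explicitly. Combining the definition \eqref{DefOfGlobalCodiff_PiecewiseAff} of $\underline{d} f(x)$ with $z_j(x)$ from \eqref{HyperGrad} and using $f = \underline{f} + \overline{f}$, each vertex takes the form
$$
  \big( a_i + b_j + \langle v_i + w_j, x \rangle - f(x),\ v_i + w_j \big), \quad i \in I.
$$
The crucial structural observation is that the gradient coordinate $v_i + w_j$ does \emph{not} depend on $x$; all of the $x$-dependence sits in the scalar first coordinate, and it enters only through the common additive term $\langle v_i + w_j, x \rangle - f(x)$.

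Next I would introduce $h_j(z) = \underline{f}(z) + b_j + \langle w_j, z \rangle$ and record two facts. Taking the maximum over the vertices above (which equals the maximum over the convex hull for a linear functional), the support function of $\underline{d} f(x) + z_j(x)$ satisfies $\max_{(a,v) \in \underline{d} f(x) + z_j(x)} (a + \langle v, y \rangle) = h_j(x + y) - f(x)$ for all $y \in \mathbb{R}^d$. Moreover, since $\overline{f}(z) = \min_k (b_k + \langle w_k, z \rangle) \le b_j + \langle w_j, z \rangle$, one has $h_j(z) \ge f(z) \ge \inf_{z \in \mathbb{R}^d} f(z) > -\infty$, so $h_j$ is bounded below, and hence so is the support function $y \mapsto h_j(x+y) - f(x)$. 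I would then invoke Corollary~\ref{Corollary_PolyhedralFunc_Nonnegative} with $C = \underline{d} f(x) + z_j(x)$, whose minimum-norm point is exactly $(a_j(x), v_j(x))$ by \eqref{ClosestPoint_MCD}, to obtain
$$
  a_j(x) \ge 0 \iff \big( h_j(x + y) \ge f(x) \ \ \forall y \in \mathbb{R}^d \big)
  \iff \inf_{z \in \mathbb{R}^d} h_j(z) \ge f(x),
$$
the last equivalence using that $z = x + y$ ranges over all of $\mathbb{R}^d$. As $\inf_z h_j(z)$ is independent of $x$, the conclusion follows at once: if $a_j(x) \ge 0$ and $f(y) \le f(x)$, then $\inf_z h_j(z) \ge f(x) \ge f(y)$, whence $a_j(y) \ge 0$.

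The only genuinely delicate step — the one I would treat most carefully — is this reduction through Corollary~\ref{Corollary_PolyhedralFunc_Nonnegative}, because the corollary's characterization ``$a_0 \ge 0 \Leftrightarrow$ nonnegativity'' is only valid once the relevant polyhedral function is known to be bounded below; establishing $h_j \ge f$ is exactly what supplies this. Everything following the equivalence is a one-line monotonicity argument, and the preceding vertex formula and support-function identity are routine consequences of the definitions together with $f = \underline{f} + \overline{f}$.
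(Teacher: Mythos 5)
Your proof is correct, and at its core it runs on the same two ingredients as the paper's own argument: the observation that the support function of $\underline{d} f(\cdot) + z_j(\cdot)$ transforms by translation, and Corollary~\ref{Corollary_PolyhedralFunc_Nonnegative} applied to the min-norm point \eqref{ClosestPoint_MCD}. The difference is organizational, but it is a genuine gain in transparency. The paper works with the two-variable function $g_j(\Delta y, y) = \max_{(a,v) \in \underline{d} f(y) + z_j(y)} (a + \langle v, \Delta y \rangle)$, proves boundedness below via $g_j(\Delta y, y) \ge f(y + \Delta y) - f(y)$, deduces $g_j(\cdot, x) \ge 0$ from $a_j(x) \ge 0$ by the corollary, and then, through the substitution $\Delta x = y - x + \Delta y$ combined with $f = \underline{f} + \overline{f}$, derives the inequality $g_j(\Delta y, y) \ge f(x) - f(y)$ before invoking the corollary a second time. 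You instead factor out the $x$-independent polyhedral convex function $h_j(z) = \underline{f}(z) + b_j + \langle w_j, z \rangle$ and record the single identity $g_j(\Delta y, y) = h_j(y + \Delta y) - f(y)$; this one line replaces the paper's substitution computation and upgrades the paper's one-way propagation to the exact characterization $a_j(x) \ge 0 \iff \inf_z h_j(z) \ge f(x)$, which makes the lemma's content self-evident: the sign of $a_j(x)$ depends on $x$ only through the value $f(x)$. You were also right to single out the boundedness-below hypothesis of the corollary as the delicate point; your bound $h_j \ge f$ (from $\overline{f} \le b_j + \langle w_j, \cdot \rangle$) supplies it in precisely the same way as the paper's inequality $g_j(\Delta y, y) \ge f(y + \Delta y) - f(y)$, so both applications of the corollary (including the reverse direction at $y$, where nonnegativity itself gives boundedness below) are legitimate.
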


\begin{proof}
For any $\Delta y, y \in \mathbb{R}^d$ denote 
$$
  g_j(\Delta y, y) = \max_{(a, v) \in \underline{d} f(y) + z_j(y)} (a + \langle v, \Delta y \rangle).
$$
Applying \eqref{PiecewiseAffFunc_CodiffDecomp} and \eqref{DefOfGlobalCodiff_PiecewiseAff} one gets that
\begin{multline*}
  f(y + \Delta y) - f(y) = \max_{(a, v) \in \underline{d} f(y)} (a + \langle v, \Delta y \rangle)
  + \min_{(b, w) \in \overline{d} f(y)} (b + \langle w, \Delta y \rangle) \\
  \le \max_{(a, v) \in \underline{d} f(y)} (a + \langle v, \Delta y \rangle)
  + b_j - \overline{f}(y) + \langle w_j, y \rangle + \langle w_j, \Delta y \rangle
  = g_j(\Delta y, y).
\end{multline*}
for any $\Delta y, y \in \mathbb{R}^d$. Hence taking into account the fact that $f$ is bounded below one obtains that
the function $g(\cdot, y)$ is bounded below for any $y \in \mathbb{R}^d$. Furthermore, note that by the definition of
$\underline{d} f(\cdot)$ and $z_j(\cdot)$ (see~\eqref{DefOfGlobalCodiff_PiecewiseAff} and \eqref{HyperGrad}) one has
\begin{equation} \label{PiecewiseAffUCA}
  g_j(\Delta y, y) = \max_{i \in I} \big(a_i + \langle v_i, y \rangle - \underline{f}(y) + 
  \langle v_i, \Delta y \rangle
  + b_j + \langle w_j, y \rangle - \overline{f}(y) + \langle w_j, \Delta y \rangle \big)
\end{equation}
for all $\Delta y, y \in \mathbb{R}^d$.

From Corollary~\ref{Corollary_PolyhedralFunc_Nonnegative} and the fact that $a_j(x) \ge 0$ it follows that the function
$g(\cdot, x)$ is nonnegative. Hence with the use of \eqref{PiecewiseAffUCA} one obtains that for any 
$\Delta x \in \mathbb{R}^d$ there exists $i \in I$ such that
$$
  a_i + \langle v_i, x \rangle - \underline{f}(x) + \langle v_i, \Delta x \rangle
  + b_j + \langle w_j, x \rangle - \overline{f}(x) + \langle w_j, \Delta x \rangle \ge 0.
$$
Setting $\Delta x = y - x + \Delta y$, and taking into account the fact that 
$f(x) = \underline{f}(x) + \overline{f}(x)$ by definition (see \eqref{DCdecomp_PiecewiseAffFunc} and
\eqref{ConvexConcaveParts_PiecewiseAffine}) one gets that for any $\Delta y, y \in \mathbb{R}^d$ there exists $i \in I$
such that
$$
  a_i + \langle v_i, y \rangle + \langle v_i, \Delta y \rangle
  + b_j + \langle w_j, y \rangle + \langle w_j, \Delta y \rangle \ge f(x).
$$
Subtracting $f(y) = \underline{f}(y) + \overline{f}(y)$ from both sides of this inequality one obtains that 
for any $y, \Delta y \in \mathbb{R}^d$ there exists $i \in I$ such that
$$
  a_i + \langle v_i, y \rangle - \underline{f}(y) + \langle v_i, \Delta y \rangle
  + b_j + \langle w_j, y \rangle - \overline{f}(y) + \langle w_j, \Delta y \rangle \ge f(x) - f(y).
$$
Taking the maximum over all $i \in I$, and applying \eqref{PiecewiseAffUCA} one gets that 
$g_j(\Delta y, y) \ge f(x) - f(y)$ for all $\Delta y, y \in \mathbb{R}^d$. Hence the function $g_j(\cdot, y)$ is
nonnegative for any $y$ such that $f(y) \le f(x)$, which by Corollary~\ref{Corollary_PolyhedralFunc_Nonnegative} and 
the definition of $a_j(y)$ (see~\eqref{ClosestPoint_MCD}) implies that $a_j(y) \ge 0$ for all such $y$.
\end{proof}

Now, we can introduce a modification of the method of codifferential descent for minimizing piecewise affine functions
of the form \eqref{DCdecomp_PiecewiseAffFunc}, which we call the method of global codifferential descent (MGCD). The
scheme of this method is given in Algorithm~\ref{algorithm_MGCD}. 

\begin{algorithm}[t] \label{algorithm_MGCD}
\caption{The method of global codifferential descent (MGCD).}

\noindent\textit{Step}~1. {Choose a starting point $x_0 \in \mathbb{R}^d$, and set $M = J = \{ 1, \ldots, s \}$ and 
$n := 0$.
}

\noindent\textit{Step}~2. {Compute $\underline{d} f(x_n)$ and $z_j(x_n)$ for all $j \in M$.}

\noindent\textit{Step}~3. {For any $j \in M$ compute $(a_j(x_n), v_j(x_n)) \in \mathbb{R}^{d + 1}$ by
solving
$$
  \min \| (a, v) \|^2	\quad \text{s.t. } (a, v) \in \underline{d} f(x_n) + z_j(x_n).
$$
If $a_j(x_n) \ge 0$, then $M := M \setminus \{ j \}$.
}

\noindent\textit{Step}~4. {If $M = \emptyset$, then \textit{stop}. Otherwise, compute $j(n) \in M$ by solving
$$
  \min_{j \in M} f\left( x_n + \frac{1}{a_j(x_n)} v_j(x_n) \right).
$$
Set $x_{n + 1} = x_n + [a_{j(n)}(x_n)]^{-1} v_{j(n)}(x_n)$, and $n := n + 1$, and go to Step~2.
}
\end{algorithm}

Let a sequence $\{ x_n \}$ be generated by the MGCD. Observe that from \eqref{MGCD_GlobalDescentDirections} it follows
that for any $n \in \mathbb{N}$ either $f(x_{n + 1}) < f(x_n)$ or $M = \emptyset$. Hence, in particular, if 
$a_j(x_n) \ge 0$ for some $j \in J$ and $n \in \mathbb{N}$, then 
\begin{equation} \label{MGCD_FiniteTermin_Discuss}
  a_j(x_k) \ge 0  \quad \forall k \ge n
\end{equation}
by Lemma~\ref{Lemma_MGCD_UselessVertices}. Therefore, if the MGCD terminates in an iteration $n$ (i.e. $M = \emptyset$
for some $n \in \mathbb{N}$), then $a_j(x_n) \ge 0$ for all $j \in J$, which by Theorem~\ref{Thrm_GlobOptCond} implies
that $x_n$ is a point of global minimum of the function $f$. Below, we prove that the MGCD always terminates in a finite
number of steps, i.e. it finds a global minimizer of a nonconvex piecewise affine function in a finite number of steps.

At first, let us explain the idea behind the proof of this result, which also illuminates the way each step of the MGCD
is performed. Suppose for the sake of simplicity that the function $f$ is convex 
(i.e. $\overline{f}(x) \equiv \{ 0 \}$, see \eqref{ConvexConcaveParts_PiecewiseAffine}). The hypodifferential
$\underline{d} f(x_n)$ is a convex polytope in $\mathbb{R}^{d + 1}$. By \eqref{HypodiffNonnegative} one has $a \le 0$
for any $(a, v) \in \underline{d} f(x_n)$, and $\max_{(a, v) \in \underline{d} f(x_n)} a = 0$. Thus, the set 
$\{ (a, v) \in \mathbb{R}^{d + 1} \mid a = 0 \} \cap \underline{d} f(x_n)$ is a nonempty face of $\underline{d} f(x_n)$
(by \eqref{MGCD_UnboundedBelow} this face is proper, i.e. it does not coincide with $\underline{d} f(x_n)$, since
otherwise $f$ is unbounded below). We call it \textit{the active face} of the polytope $\underline{d} f(x_n)$. It is
easy to see that the subdifferential $\partial f(x_n)$ is exactly the set of those $v$ for which $(0, v)$ belongs to the
active face of $\underline{d} f(x_n)$.

\begin{figure}[t]
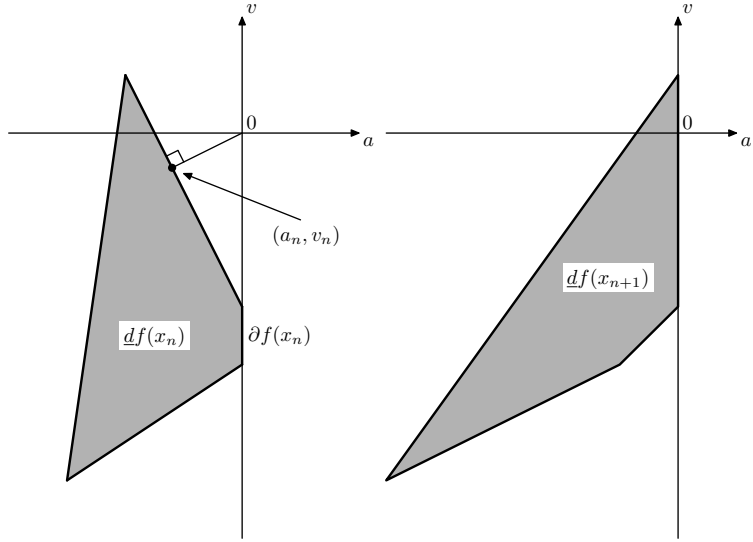

\centering
\includegraphics[width=0.4\linewidth]{Example1}
\includegraphics[width=0.4\linewidth]{Example2}
\caption{The transformation of the global codifferential over one step of the MGCD: $\underline{d} f(x_n)$ (left figure)
and $\underline{d} f(x_{n + 1})$ (right figure). Note that all points shift only horizontally, i.e. along the
$a$-axis (see~\eqref{DefOfGlobalCodiff_PiecewiseAff}).}
\label{Fig_HypodiffTransformation}
\end{figure}

The point
\begin{equation} \label{ClosestPointHypodiff_ConvexCase}
  \{ (a_n, v_n) \} = \argmin\Big\{ \| (a, v) \| \Bigm| (a, v) \in \underline{d} f(x_n) \Big\}
\end{equation}
lies on a face $F$ of $\underline{d} f(x_n)$, which is not active, since otherwise, $f$ is unbounded below by
\eqref{MGCD_UnboundedBelow}. When one performs one iteration of the MGCD, the polytope $\underline{d} f(x_n)$
transforms, and, as we will show in the proof below, the face $F$ becomes the active face of the polytope 
$\underline{d} f(x_{n + 1})$. Thus, the projection $(a_n, v_n)$ belongs to a face of the hypodifferential, which
becomes active on the next iteration (see~Fig.~\ref{Fig_HypodiffTransformation}). 

Bearing these observations in mind one can prove the finite convergence of the MGCD by showing that in a finite
number of iterations the projection $(a_n, v_n)$ belongs to a face of $\underline{d} f(x_n)$ that intersects the axis 
$\{ (a, 0) \in \mathbb{R}^{d + 1} \mid a \in \mathbb{R} \}$. Then $0 \in \partial f(x_{n + 1})$, and the proof is
complete. In the case, when the function $f$ is not convex, a similar argument allows one to prove that in a finite
number of iterations an index $j(n)$ is discarded. Repeating the same argument $s$ times one can verify that in a finite
number of iterations all indices are discarded, and the MGCD terminates.

\begin{theorem} \label{Thrm_MGCD_PiecewiseAffine}
Let $f \colon \mathbb{R}^d \to \mathbb{R}$ be a bounded below piecewise affine function. Then $f$ attains a global
minimum, and the MGCD finds a point of global minimum of this function in a finite number of steps.
\end{theorem}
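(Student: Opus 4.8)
The plan is to reduce the entire statement to the single assertion that \textbf{the MGCD performs only finitely many iterations}. Granting this, the method stops at some $x_N$ with $M = \emptyset$. Each index $j \in J$ was removed from $M$ at some iteration $m_j \le N$ precisely because $a_j(x_{m_j}) \ge 0$; since the values $f(x_n)$ are nonincreasing along the method (see \eqref{MGCD_GlobalDescentDirections}), we have $f(x_N) \le f(x_{m_j})$, so Lemma~\ref{Lemma_MGCD_UselessVertices} yields $a_j(x_N) \ge 0$ for \emph{every} $j \in J$. Theorem~\ref{Thrm_GlobOptCond} then shows that $x_N$ is a point of global minimum of $f$; in particular $f$ attains its global minimum and the MGCD has found it. Thus both claims follow once finiteness is established, and the rest of the work is devoted to it.

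I would first treat the convex case $\overline{f} \equiv 0$, where $J$ is a singleton and $z_j \equiv (0,0)$, so that each iteration works with the min-norm element $(a_n, v_n)$ of the moving polytope $\underline{d} f(x_n) = \co\{(\ell_i(x_n) - \underline{f}(x_n), v_i) \mid i \in I\}$, with $\ell_i(x) = a_i + \langle v_i, x\rangle$. Two structural facts drive the analysis: by \eqref{RecomputingGlobalCodiff} the vertices of $\underline{d} f(x)$ shift only horizontally as $x$ varies, the $v$-coordinates $v_i$ staying fixed; and, writing the variational inequality for the projection $(a_n, v_n)$, exactly the pieces on the minimal face of $\underline{d} f(x_n)$ through $(a_n, v_n)$ (whose vertex set I denote $S_n$) satisfy $\ell_i(x_{n+1}) = \underline{f}(x_{n+1})$ after the step $x_{n+1} = x_n + a_n^{-1} v_n$, i.e. that face becomes the active face of $\underline{d} f(x_{n+1})$. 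In particular $(0, v_n) \in \underline{d} f(x_{n+1})$, whence $\|(a_{n+1}, v_{n+1})\| \le \|v_n\| < \|(a_n, v_n)\|$: the projection norm $\rho_n = \|(a_n, v_n)\|$ is \emph{strictly} decreasing. Moreover the method terminates exactly when the minimal face through the projection meets the $a$-axis $\{(a,0)\}$, for then $0 \in \co\{v_i \mid i \in S_n\}$ and, that face being active at $x_{n+1}$, one gets $(0,0) \in \underline{d} f(x_{n+1})$, i.e. $0 \in \partial f(x_{n+1})$.

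The hard part will be upgrading this strict monotonicity into genuine \emph{finiteness}: a continuously decreasing quantity need not reach its target in finitely many steps. My plan here is to exploit that the $v$-coordinates of the vertices form a fixed finite set, so that there are only finitely many possible supports $S_n \subseteq I$ of the successive projections, the successive active faces being $A_{n+1} = S_n$. Combining this finiteness of combinatorial types with the strict decrease of $\rho_n$ and with the interlacing $\rho_{n+1} \le \dist(0, \co\{v_i \mid i \in S_n\}) \le \rho_n$, I would argue by contradiction that an infinite run is impossible, so that after finitely many iterations the minimal face through the projection must intersect the $a$-axis and the method stops. Turning this combinatorial bookkeeping into an airtight finiteness count, while the polytopes themselves move from iteration to iteration, is the main obstacle of the proof.

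Finally I would lift the result to the nonconvex case. By \eqref{MGCD_FiniteTermin_Discuss} an index, once removed from $M$, never returns, so at most $s = |J|$ removals can occur; it therefore suffices to show that, as long as $M \ne \emptyset$, some index is removed after finitely many iterations. For a fixed $j$ the upper estimate $f(y) - f(x) \le g_j(y - x, x)$ from the proof of Lemma~\ref{Lemma_MGCD_UselessVertices}, together with the representation \eqref{CoexhausterRepresentation}, lets me replay the convex argument for the polyhedral convex function $g_j(\cdot, x_n)$ and its projection $(a_j(x_n), v_j(x_n))$, forcing $a_j(x_n) \ge 0$ after finitely many steps and hence the removal of $j$. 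Applying this at most $s$ times gives $M = \emptyset$ in a finite number of iterations, which closes the reduction and completes the proof.
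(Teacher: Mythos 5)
Your reduction in the first paragraph (finitely many iterations $\Rightarrow$ $M = \emptyset$ at termination $\Rightarrow$ $a_j(x_N) \ge 0$ for all $j$ via Lemma~\ref{Lemma_MGCD_UselessVertices} and monotonicity of $f(x_n)$ $\Rightarrow$ global optimality via Theorem~\ref{Thrm_GlobOptCond}) is exactly the paper's, and your convex-case geometry (vertices shift only in the $a$-coordinate, the face through the projection becomes active at $x_{n+1}$, termination when that face meets $\mathbb{R} \times \{0\}$) matches the paper's motivational discussion. The finiteness count you flag as the ``main obstacle'' is in fact closable along your own lines: setting $d_S = \dist\bigl(0, \co\{v_i \mid i \in S\}\bigr)$, your interlacing gives $d_{S_{n+1}} \le \rho_{n+1} \le d_{S_n}$, so $n \mapsto d_{S_n}$ is nonincreasing with values in a finite set, hence eventually constant; constancy forces $\rho_{n+1} = \|v_{n+1}\|$, i.e. $a_{n+1} = 0$, and then boundedness below together with \eqref{MGCD_UnboundedBelow} forces $v_{n+1} = 0$ and termination. (A minor slip: the face that becomes active is the exposed face cut out by the variational inequality, which may strictly contain the minimal face through the projection, so ``exactly the pieces on the minimal face'' is too strong; this does not affect the inequalities you use.)

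The genuine gap is the nonconvex lift. ``Replaying the convex argument for $g_j(\cdot, x_n)$ for a fixed $j$'' does not work, because the step $x_{n+1} - x_n = a_{j(n)}(x_n)^{-1} v_{j(n)}(x_n)$ is taken along the \emph{selected} index $j(n)$ only: for $j \ne j(n)$ the $a$-coordinates of $\underline{d} f(x) + z_j(x)$ shift by $\langle v, x_{n+1} - x_n \rangle - (f(x_{n+1}) - f(x_n))$, the top level $b_j + \langle w_j, x \rangle - \overline{f}(x)$ can both grow and shrink along the iterates, and neither the projection norm $\|(a_j(x_n), v_j(x_n))\|$ nor its support is monotone in $n$ for fixed $j$; even for the selected index the shifted face sits at height $g_n(0) \ge 0$ rather than $0$, since the minimum over $k$ in \eqref{CoexhausterRepresentation} may be attained at $k \ne j(n)$, so your strict interlacing degrades. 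The paper does not argue per index at all: it introduces the finite family $\mathcal{E}$ of sets $C = \co\{v_{i_1}, \ldots, v_{i_k}\} + w_j$ with $0 \notin C$ (data taken from \eqref{DCdecomp_PiecewiseAffFunc}) and the uniform gap $\theta = \min_{C \in \mathcal{E}} \min_{v \in C} \|v\|^2 > 0$, and uses the quantitative decrease \eqref{MGCD_DecayEstimate}, $f(x_{n+1}) \le f(x_n) - \min\{1, \|v_n\|^2\}$, so that boundedness below forces, within $n^* = \lfloor (f(x_0) - f^*)/\min\{1, \theta\} \rfloor + 1$ iterations, a step with $a_n < 0$ and $\|v_n\|^2 < \theta$; at such a step the face through the projection meets $\mathbb{R} \times \{0\}$, whence $0 \in \partial g_n(0)$, $g_n$ is nonnegative, and Corollary~\ref{Corollary_PolyhedralFunc_Nonnegative} discards the selected index $j(n)$ at the next iterate, permanently by Lemma~\ref{Lemma_MGCD_UselessVertices}; repeating gives at most $s n^*$ iterations. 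This uniform $\theta$-based progress measure, tying the decrease of $f$ to a constant extracted from the finitely many $v$-part hulls, is the missing idea: without it your concluding claim that $a_j(x_n) \ge 0$ is ``forced after finitely many steps'' for each fixed $j$ is unsupported.
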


\begin{proof}
Let $\{ x_n \}$ be a possibly infinite sequence generated by the MGCD for the function $f$. Denote
$a_n = a_{j(n)}(x_n)$ and $v_n = v_{j(n)}(x_n)$, where the index $j(n)$ is computed on Step~4 of the MGCD. Note that
this definition of $(a_n, v_n)$ coincides with \eqref{ClosestPointHypodiff_ConvexCase}, if $\overline{f}(x) \equiv 0$,
since in this case $z_j(x) \equiv 0$ for all $j$ (see Steps~3 and 4 of the MGCD,
\eqref{ConvexConcaveParts_PiecewiseAffine} and \eqref{HyperGrad}). 

From Theorem~\ref{Thrm_GlobOptCond} it follows that if $x_n$ is not a global minimizer of $f$, then there exists 
$j \in J$ such that $a_j(x_n) < 0$, and
\begin{align}
  f(x_{n + 1}) \le f\left( x_n + \frac{1}{a_j(x_n)} v_j(x_n) \right) 
  &\le f(x_n) - \frac{1}{|a_j(x_n)|} \big\| (a_j(x_n), v_j(x_n)) \big\|^2  \notag\\
  &= f(x_n) - |a_j(x_n)| - \frac{1}{|a_j(x_n)|} \| v_j(x_n) \|^2 \label{MGCD_DecayOverOneIteration}
\end{align}
(see \eqref{MGCD_GlobalDescentDirections} and Step~4 of the MGCD). Note that
$$
  - |a_j(x_n)| - \frac{1}{|a_j(x_n)|} \| v_j(x_n) \|^2 
  \le \begin{cases}
    - 1, & \text{if } |a_j(x_n)| \ge 1, \\
    - \| v_j(x_n) \|^2, & \text{otherwise.}
  \end{cases}
$$
Hence, if $x_n$ is a not a point of global minimum of $f$, then 
\begin{equation} \label{MGCD_DecayEstimate}
  f(x_{n + 1}) - f(x_0) \le - \sum_{k = 0}^n 
  \left( |a_k| + \frac{1}{|a_k|} \big\| v_k \big\|^2 \right)
  \le - \sum_{k = 0}^n \min\big\{ 1, \| v_k \|^2 \big\}.
\end{equation}
Denote by $\mathcal{E}$ the family of all convex sets $C \subset \mathbb{R}^d$ such that $0 \notin C$, and
$$
  C = \co\{ v_{i_1}, \ldots, v_{i_k}  \} + w_j
$$
for some $i_1, \ldots, i_k \in I$, $1 \le k \le l$, and $j \in J$, where the vectors $v_i$ and $w_j$ are from the
DC decomposition of the function $f$ (see~\eqref{DCdecomp_PiecewiseAffFunc}). Clearly, $\mathcal{E}$ is a finite family
of compact convex sets, and $\theta = \min_{C \in \mathcal{E}} \min_{v \in C} \| v \|^2 > 0$. 

Denote $f^* = \inf_{x \in \mathbb{R}^d} f(x) > - \infty$, and 
$n^* = \lfloor (f(x_0) - f^*) / \min\{ \theta, 1 \} \rfloor + 1$ (here $\lfloor t \rfloor$ is the greatest integer less
than or equial to $t \in \mathbb{R}$). From \eqref{MGCD_DecayEstimate} it follows that there exists $n \le n^*$ such
that either the MGCD terminates at the step $n$ or $a_n < 0$ and $\| v_n \|^2 < \theta$. 

Suppose that $x_n$ is not a global minimizer of $f$. By definition $(a_n, v_n)$ belongs to the convex polytope
$\underline{d} f(x_n) + z_{j(n)}(x_n)$ (see Step~3 of the MGCD). Any convex polytope is equal to the disjoint union of
the relative interiors of its faces, i.e. the relative interiors of all faces of a convex polytope are pairwise
disjoint, and the polytope is equal to the union of these relative interiors (see~\cite{Ziegler}, p.~61). Therefore,
$(a_n, v_n)$ belongs to the relative interior $\relint F$ of a face $F$ of $\underline{d} f(x_n) + z_{j(n)}(x_n)$. 

With the use of the necessary and sufficient condition for a minimum of a convex function on a convex set
\cite[Proposition~II.2.1]{EkelandTemam} one obtains that
\begin{equation} \label{FiniteConv_SeparationThrm}
  a_n a + \left\langle v_n, v \right\rangle \ge \| (a_n, v_n) \|^2
  \quad \forall (a, v) \in \underline{d} f(x_n) + z_{j(n)}(x_n),
\end{equation}
and this inequality turns into an equality when $(a, v) = (a_n, v_n)$. By \cite[Prop.~2.3]{Ziegler} the face $F$ is
itself a polytope. Consequently, applying the characterization of relative interior points of a convex polytope
\cite[Lemma~2.9]{Ziegler} and the fact that $(a_n, v_n) \in \relint F$ one gets that 
\begin{equation} \label{EqualityOnFace}
  a_n a + \left\langle v_n, v \right\rangle = \| (a_n, v_n) \|^2 \quad \forall (a, v) \in F
\end{equation}
Note also that the face $F$ is a polytope whose vertices are vertices of $\underline{d} f(x_n) + z_{j(n)}(x_n)$ as well 
{\cite[Prop.~2.3]{Ziegler}}. Therefore 
$$
  F = \co\{ (a_{i_r} + \langle v_{i_r}, x_n \rangle - \underline{f}(x_n), v_{i_r}) \mid 1 \le r \le k \} 
  + z_{j(n)}(x_n)
$$
for some $i_1, \ldots, i_k \in I$ and $1 \le k \le l$ (see~\eqref{DefOfGlobalCodiff_PiecewiseAff}). From the
definition of $\theta$, and the facts that  $(a_n, v_n) \in F$ and $\| v_n \|^2 < \theta$ it follows that 
$F \cap (\mathbb{R} \times \{ 0 \}) \ne \emptyset$. 

Introduce the convex function 
\begin{equation} \label{AboutToBeDiscarded}
  g_n(x) = \max_{(a, v) \in \underline{d} f(x_{n + 1}) + z_{j(n)}(x_{n + 1})} (a + \langle v, x \rangle).
\end{equation}
Let us verify that $0 \in \partial g_n(0)$. Indeed, by the definition of $z_j(x)$ (see~\eqref{HyperGrad}) one has
\begin{align*}
  z_{j(n)}(x_{n + 1}) 
  &= \big( b_{j(n)} - \overline{f}(x_{n + 1}) + \langle w_{j(n)}, x_{n + 1} \rangle, w_{j(n)} \big) \\
  &= z_{j(n)}(x_n) 
  + \big( \overline{f}(x_n) - \overline{f}(x_{n + 1}) + \langle w_{j(n)}, x_{n + 1} - x_n \rangle, 0 \big).
\end{align*}
Similarly, by \eqref{RecomputingGlobalCodiff} one has
$$
  \underline{d} f(x_{n + 1}) 
  = \big\{ (a + \underline{f}(x_n) - \underline{f}(x_{n + 1}) + \langle v, x_{n + 1} - x_n \rangle, v) 
  \in \mathbb{R}^{d + 1} \bigm| (a, v) \in \underline{d} f(x_n) \big\}
$$
Consequently, applying the equality $f(x) = \underline{f}(x) + \overline{f}(x)$ (see
\eqref{DCdecomp_PiecewiseAffFunc} and \eqref{ConvexConcaveParts_PiecewiseAffine}) one obtains that
\begin{multline*}
  \underline{d} f(x_{n + 1}) + z_{j(n)}(x_{n + 1}) \\
  = \big\{ (a + \langle v, x_{n + 1} - x_n \rangle - f(x_{n + 1}) + f(x_n), v) \bigm| 
  (a, v) \in \underline{d} f(x_n) + z_{j(n)} (x_n) \big\}.
\end{multline*}
Therefore
$$
  g_n(0) = \max_{(a, v) \in \underline{d} f(x_n) + z_{j(n)} (x_n)} (a + \langle v, x_{n + 1} - x_n \rangle) 
  - f(x_{n + 1}) + f(x_n).
$$
Hence taking into account \eqref{FiniteConv_SeparationThrm}, \eqref{EqualityOnFace}, and the facts that 
$x_{n + 1} - x_n = a_n^{-1} v_n$ and $a_n < 0$ one gets that
\begin{equation} \label{NonnegativityOnNextStep}
  g_n(0) = \frac{1}{a_n} \| (a_n, v_n) \|^2 - f(x_{n + 1}) + f(x_n) \ge 0,
\end{equation}
where the last inequality follows from \eqref{MGCD_DecayOverOneIteration}. Furthermore, the maximum in the definition of
$g_n(0)$ is attained at the points $(a + \langle v, x_{n + 1} - x_n \rangle - f(x_{n + 1}) + f(x_n), v)$ with 
$(a, v) \in F$. Consequently, one has $\{ v \mid \exists (a, v) \in F \} \subseteq \partial g_n(0)$, which implies that
$0 \in \partial g_n(0)$ (since $F \cap (\mathbb{R} \times \{ 0 \}) \ne \emptyset$), i.e. $0$ is the point of global
minimum of the function $g_n(x)$. Hence and from \eqref{NonnegativityOnNextStep} it follows that the function $g_n$ is
nonnegative. Taking into account \eqref{AboutToBeDiscarded}, and applying
Corollary~\ref{Corollary_PolyhedralFunc_Nonnegative} one obtains that $a_{j(n)}(x_{n + 1}) \ge 0$ 
(see Step~3 of the MGCD). Therefore the index $j(n)$ is discarded by the MGCD, and by
Lemma~\ref{Lemma_MGCD_UselessVertices} one has $a_{j(n)}(x_k) \ge 0$ for all $k \ge n + 1$.

Thus, there exists $n_1 \le n^*$ such that the MGCD discards the index $j(n_1)$ in the $(n_1 + 1)$th iteration. Recall 
that $n^* = \lfloor (f(x_0) - f^*) / \min\{ \theta, 1 \} \rfloor + 1$. Taking into account \eqref{MGCD_DecayEstimate}
one obtains that there exists $n_2 \le n_1 + n^* \le 2 n^*$ such that either the MGCD terminates at the $n_2$th
iteration or $a_{n_2} < 0$, and $\| v_{n_2} \|^2 < \theta$. Arguing in the same way as above one can easily verify that
the MGCD discards the index $j(n_2)$ in the $(n_2 + 1)$th iteration, and $a_{j(n_2)}(x_k) \ge 0$ for all $k \ge n_2 +
1$. Repeating the same argument $s$ times one obtains that the MGCD discards all indices from the set $M$ in at most $s
n^*$ iterations, and, thus, terminates in a finite number of steps. Furthermore, if MGCD terminates at an $n$th
iteration, then, as it was pointed out above (see \eqref{MGCD_FiniteTermin_Discuss}), by
Lemma~\ref{Lemma_MGCD_UselessVertices} one has $a_j(x_n) \ge 0$ for all $j \in J$, which with the use of
Theorem~\ref{Thrm_GlobOptCond} implies that $x_n$ is a point of global minimum of the function $f$, and the proof is
complete.
\end{proof}

\begin{remark}
Note that the theorem above is valid for any method generating a sequence $\{ x_n \}$ such that 
for all $n \in \mathbb{N}$ one has $f(x_{n + 1}) \le f(x_n)$ and
\begin{equation} \label{Codiff_FiniteConvergCond}
  f(x_{n + 1}) \le f\left( x_n + \frac{1}{a_j(x_n)} v_j(x_n) \right)
  \quad \forall j \in J \colon a_j(x_n) < 0.
\end{equation}
Indeed, let
$$
  j(n) \in \argmin\bigg\{ f\left( x_n + \frac{1}{a_j(x_n)} v_j(x_n) \right) \biggm| j \in J \colon a_j(x_n) < 0 \bigg\},
$$
and denote $(a_n, v_n) = (a_{j(n)}(x_n), v_{j(n)}(x_n))$. Taking into account \eqref{MGCD_DecayOverOneIteration} it is
easy to see that there exists $n \le n^*$ such that either the method terminates at the $n$th iteration or $a_n < 0$
and $\| v_n \|^2 < \theta$, where $n^*$ and $\theta$ are defined in the proof of
Theorem~\ref{Thrm_MGCD_PiecewiseAffine}. Denote $y_n = x_n + a_n^{-1} v_n$. Arguing in the same way as in the proof of
Theorem~\ref{Thrm_MGCD_PiecewiseAffine} one can check that $a_{j(n)}(y_n) \ge 0$, which with the use of
Lemma~\ref{Lemma_MGCD_UselessVertices} and inequality \eqref{Codiff_FiniteConvergCond} implies that 
$a_{j(n)}(x_{n + 1}) \ge 0$. Furthermore, from \eqref{MGCD_DecayOverOneIteration} and \eqref{Codiff_FiniteConvergCond}
it follows that $f(x_{k + 1}) < f(x_k)$ for all $k \in \mathbb{N}$ such that $x_k$ is not a global minimizer of $f$.
Therefore, applying Lemma~\ref{Lemma_MGCD_UselessVertices} again one obtains that $a_{j(n)}(x_k) \ge 0$ for all 
$k \ge n + 1$. Repeating the same argument $s$ times one can easily verify that there exists $n \le s n^*$ such that
$a_j(x_k) \ge 0$ for all $k \ge n + 1$ and for all $j \in J$, which with the use of Theorem~\ref{Thrm_GlobOptCond}
implies the required result.

Observe that from \eqref{MGCD_GlobalDescentDirections} it follows that condition \eqref{Codiff_FiniteConvergCond} is
satisfied for the original version of method of codifferential descent with $\mu = + \infty$ and
$\overline{d}_{\mu} f(x) = \{ z_j(x) \mid j \in J \}$, which implies that the MCD also finds a point of global minimum
of a piecewise affine function in a finite number of steps.

Note that the MGCD discards those $(b_j, w_j)$ which no longer provide information about descent directions of
the function $f$, while the MCD keeps using all points $(b_j, w_j)$. Sometimes directions $v_j(x_n)$ such that
$a_j(x_n) \ge 0$ might provide some global information to the optimization method 
(i.e. $f(x_n) > \min_{\alpha > 0} f(x_n - \alpha v_j(x_n)$); however, this effect seems to be purely random, and it is
reasonable to discard those $j \in J$ for which $a_j(x_n) \ge 0$.

Let us finally note that it is unclear which version of the method of codifferential descent (the MCD or the MGCD) is
better for minimizing piecewise affine functions in terms of overall performance. Further research and extensive
numerical experiments are needed to answer this question. In particular, it is interesting to find a sharp upper
bound on the number of iterations of these methods. However, these questions lie outside the scope of this article, and
we leave them as open problems for future research.
\end{remark}

At the end of the paper, let us give a simple example demonstrating how one can compute a global codifferential mapping
of a piecewise affine function with the use of Proposition~\ref{Prp_GlobalCodiffCalc}, and how the MGCD can escape 
a local minimum, and find a point of global minimum in just one iteration.

\begin{example}
Let $d = 2$, and
$$
  f(x) = \min\big\{ \max\{|x_1|, |x_2| \}, 1 + \max\{ 2|x_1 - 2|, |x_2 - 2| \} \big\}.
$$
Set $x_0 = (2, 2)$. It is easily seen that $x_0$ is a point of local minimum of the function $f$, while a global
minimum is attained at the point $x^* = (0, 0)$. Our aim is to apply the MGCD with the starting point $x_0$ to the
function $f$. Instead of computing a DC decomposition of the function $f$ of the form \eqref{DCdecomp_PiecewiseAffFunc},
and then applying \eqref{DefOfGlobalCodiff_PiecewiseAff} in order to find $D f(x_0)$, we will compute a global
codifferential $D f(x_0)$ directly with the use of Proposition~\ref{Prp_GlobalCodiffCalc} (see
Remark~\ref{Remark_DCdecomp_Equiv_Codiff}). 

Let us compute $D f(x_0)$. Define
$$
  g_1(x) = \max\{|x_1|, |x_2| \}, \quad g_2(x) = 1 + \max\{ 2|x_1 - 2|, |x_2 - 2| \}.
$$
With the use of parts~\eqref{Calc_AffineFunc}, \eqref{Calc_PlusConstant} and (\ref{Calc_Max}) of
Proposition~\ref{Prp_GlobalCodiffCalc} one obtains
\begin{align*}
  \underline{d} g_1(x_0) &= \co\left\{ \begin{pmatrix} 0 \\ 1 \\ 0 \end{pmatrix}, 
  \begin{pmatrix} -4 \\ -1 \\ 0 \end{pmatrix}, \begin{pmatrix} 0 \\ 0 \\ 1 \end{pmatrix},
  \begin{pmatrix} -4 \\ 0 \\ -1 \end{pmatrix}
  \right\}, \quad \overline{d} g_1(x_0) = \{ 0 \}, \\
  \underline{d} g_2(x_0) &= \co\left\{ \begin{pmatrix} 0 \\ 2 \\ 0 \end{pmatrix}, 
  \begin{pmatrix} 0 \\ -2 \\ 0 \end{pmatrix}, \begin{pmatrix} 0 \\ 0 \\ 1 \end{pmatrix},
  \begin{pmatrix} 0 \\ 0 \\ -1 \end{pmatrix}
  \right\}, \quad \overline{d} g_2(x_0) = \{ 0 \}.
\end{align*}
Taking into account the fact that $f(x) = \min\{ g_1(x), g_2(x) \}$, and applying part~(\ref{Calc_Min}) of
Proposition~\ref{Prp_GlobalCodiffCalc} one gets that
$\underline{d} f(x_0) = \underline{d} g_1(x_0) + \underline{d} g_2(x_0)$, i.e.
\begin{align*}
  \underline{d} f(x_0) = \co\Biggl\{ &\begin{pmatrix} 0 \\ 3 \\ 0 \end{pmatrix},
  \begin{pmatrix} -4 \\ 1 \\ 0 \end{pmatrix}, \begin{pmatrix} 0 \\ 2 \\ 1 \end{pmatrix}, 
  \begin{pmatrix} -4 \\ 2 \\ -1 \end{pmatrix}, \begin{pmatrix} 0 \\ -1 \\ 0 \end{pmatrix}, 
  \begin{pmatrix} -4 \\ -3 \\ 0 \end{pmatrix}, \begin{pmatrix} 0 \\ -2 \\ 1 \end{pmatrix}, 
  \begin{pmatrix} -4 \\ -2 \\ -1 \end{pmatrix}, \\
  & \begin{pmatrix} 0 \\ 1 \\ 1 \end{pmatrix}, \begin{pmatrix} -4 \\ -1 \\ 1 \end{pmatrix}, 
  \begin{pmatrix} 0 \\ 0 \\ 2 \end{pmatrix}, \begin{pmatrix} -4 \\ 0 \\ 0 \end{pmatrix},
  \begin{pmatrix} 0 \\ 1 \\ -1 \end{pmatrix}, \begin{pmatrix} -4 \\ -1 \\ -1 \end{pmatrix},
  \begin{pmatrix} 0 \\ 0 \\ 0 \end{pmatrix}, \begin{pmatrix} -4 \\ 0 \\ -2 \end{pmatrix} \Biggr\}.
\end{align*}
One also has
\begin{align*}
  \overline{d} f(x_0) &= \co\left\{ \begin{pmatrix} 1 \\ 0 \\ 0 \end{pmatrix} 
  + \overline{d} g_1(x_0) - \underline{d} g_2(x_0),
  \overline{d} g_2(x_0) - \underline{d} g_1(x_0)
  \right\} \\
  &=\co\left\{ \begin{pmatrix} 1 \\ 2 \\ 0 \end{pmatrix}, \begin{pmatrix} 1 \\ -2 \\0 \end{pmatrix},
   \begin{pmatrix} 1 \\ 0 \\ 1 \end{pmatrix}, \begin{pmatrix} 1 \\ 0 \\ -1 \end{pmatrix},
   \begin{pmatrix} 0 \\ -1 \\0 \end{pmatrix}, \begin{pmatrix} 4 \\ 1 \\0 \end{pmatrix},
   \begin{pmatrix} 0 \\ 0 \\ -1 \end{pmatrix}, \begin{pmatrix} 4 \\ 0 \\ 1 \end{pmatrix}
   \right\}.
\end{align*}
Let us apply the MGCD. Solving the problem
$$
  \min \| (a, v) \|^2 \quad \text{subject to} \quad (a, v) \in \underline{d} f(x_0) + z_i(x_0)
$$
(Step~3 of the MGCD), one can check that for $z_1(x_0) = (1, 2, 0)^T \in \overline{d} f(x_0)$ one has
$$
  (a_1(x_0), v_1(x_0)) \approx ( -0.1111, 0.2222, 0.2222).
$$
Thus, $a_1(x_0) < 0$, and $x_0$ is not a point of global minimum of $f$ by Theorem~\ref{Thrm_GlobOptCond}. Furthermore,
one has $x_1 = x_0 + [a_1(x_0)]^{-1} v_1(x_0) = (0, 0) = x^*$, i.e. the MGCD finds a point of global minimum of
the function $f$ in just one step.
\end{example}

\section{Conclusions}

In this paper we analysed the performance of the method of codifferential descent in the case when the objective
function is either convex or piecewise affine. We proved that in the convex case this method has the iteration
complexity bound $\mathcal{O}(\varepsilon^{-1})$, provided the objective function satisfies some natural regularity
assumptions, which in the smooth case are reduced to the Lipschitz continuity of the gradient. We also proposed a
modification of the MCD for minimizing nonconvex piecewise affine function, and demonstrated that the modified method as
well as the MCD itself find a global minimizer of a nonconvex piecewise affine function in a finite number of steps. The
proof of this result is largely based on new global optimality conditions for piecewise affine functions obtained in
this article.

\section*{Acknowledgements}

The author wishes to express his thanks and gratitude to T. Angelov. The results of his numerical experiments, which he
shared with the author several years ago, demonstrated that the method of codifferential descent converges to a global
minimizer of a piecewise affine function in a finite number of steps. These results were the main source of 
the inspiration behind the research presented in this article.

\bibliographystyle{abbrv}  
\bibliography{Hypodiff_Descent}

\end{document}